\definecolor{darkred}{RGB}{139,0,0}
\definecolor{darkgreen}{RGB}{0,100,0}
\definecolor{darkmagenta}{RGB}{139,0,139}
\newcommand{\xleftrightarrow}[2][]{\ext@arrow 3359\leftrightarrowfill@{#1}{#2}}
\newcommand{\xdasharrow}[2][->]{
\tikz[baseline=-\the\dimexpr\fontdimen22\textfont2\relax]{
\node[anchor=south,font=\scriptsize, inner ysep=1.5pt,outer xsep=2.2pt](x){#2};
\draw[shorten <=3.4pt,shorten >=3.4pt,dashed,#1](x.south west)--(x.south east);
}
}
\newcommand{\DEBUG}{}
  \def\rem#1{{\marginpar{\raggedright\scriptsize #1}}}
  \newcommand{\pmr}[1]{\rem{\color{blue}{$\bullet$ #1}}}
  \newcommand{\ppr}[1]{\rem{\color{red}{$\bullet$ #1}}}
  \newcommand{\ppr}[1]{}
  \newcommand{\pmr}[1]{}
\theoremstyle{plain}
\newtheorem{theorem}{Theorem}
\newtheorem{lemma}{Lemma}
\newtheorem{fact}{Fact}
\newtheorem{proposition}{Proposition}
\theoremstyle{definition}
\newtheorem{remark}{Remark}
\begin{document}

\title
[Randomized Euler schemes]
{On the randomized Euler schemes for ODEs \\ under inexact information}

\author[T. Bochacik]{Tomasz Bochacik}
\address{AGH University of Science and Technology,
Faculty of Applied Mathematics,
Al. A.~Mickiewicza 30, 30-059 Krak\'ow, Poland}
\email{bochacik@agh.edu.pl, corresponding author}

\author[P. Przyby{\l}owicz]{Pawe{\l} Przyby{\l}owicz}
\address{AGH University of Science and Technology,
Faculty of Applied Mathematics,
Al. A.~Mickiewicza 30, 30-059 Krak\'ow, Poland}
\email{pprzybyl@agh.edu.pl}

\begin{abstract}
We analyse errors of randomized explicit and implicit Euler schemes for approximate solving of ordinary differential equations (ODEs). We consider  classes of ODEs for which the right-hand side functions satisfy Lipschitz condition  globally or only locally. Moreover, we assume that only inexact discrete information, corrupted by some noise, about the right-hand side function is available. Optimality and stability of explicit and implicit randomized Euler algorithms are also investigated.
\newline
\newline
\textbf{Key words:} noisy information, randomized Euler algorithms, explicit and implicit schemes, $n$th minimal error, optimality, stability
\newline
\newline
\textbf{MSC 2010:} 65C05,\ 65C20,\ 65L05,\ 65L06,\ 65L20
\end{abstract}
\maketitle
\tableofcontents
In this paper we consider ordinary differential equations (ODEs) of the following form: 
\begin{equation}
	\label{eq:ode}
		\left\{ \begin{array}{ll}
			z'(t)= f(t,z(t)), \ t\in [a,b], \\
			z(a) = \eta, 
		\end{array}\right.
\end{equation}
where $-\infty < a < b < \infty$, $\eta\in\mathbb{R}^d$, $f \colon [a,b]\times\mathbb{R}^d\to\mathbb{R}^d$, $d\in\mathbb{Z}_+$. We will consider the class of randomized algorithms and investigate the main properties of the randomized Euler schemes under inexact information, such as error bounds, optimality and stability.

Randomized algorithms for the approximate solving of ODEs have attracted attention in the recent years, see for example \cite{daun1, hein1, HeinMilla, JenNeuen, Kac1, KruseWu_1, stengle1, stengle2}. Nevertheless, there is still a~large space for further research on this topic in the setting of inexact information. This paper is an attempt in this direction. It is worth noting that related numerical problems -- such as function integration and approximation, approximate solving of PDEs, stochastic integration and SDEs -- are already extensively studied in the noisy information framework, see for instance  \cite{MoPl16, MoPl20, Wer96, Wer97, KaMoPr19, PMPP17, PMPP19}.

Randomized implicit and explicit Euler schemes have been investigated under exact information in the  articles \cite{backward_euler, JenNeuen, KruseWu_1, stengle1, stengle2}. In this paper we allow noisy information about the right-hand side function $f$. We will use similar assumptions as in \cite{randRK}, among which the key one is the (local or global) Lipschitz condition with respect to the state variable of $f$. The structure of this paper and considered problems resemble those from \cite{randRK} but here we consider the other algorithms; thus the current research can be viewed as a continuation of our previous work. 

We start from $L^p(\Omega)$-error analysis of randomized Euler schemes under inexact information. Result concerning error bound for the explicit scheme (Theorem \ref{theorem_explicit}) is more general, as it requires only local Lipschitz continuity of $f$ and linear growth of the noise function. Analogous result for the implicit scheme (Theorem \ref{theorem_implicit}) has been proven under global Lipschitz condition for $f$. Furthermore,  we establish lower error bounds for all algorithms based on randomized inexact  information in a certain class of right-hand side functions (Theorem \ref{lower_bounds}) and we provide condition for optimality of the randomized Euler schemes (Proposition \ref{fact_not_optimal}). Another novelty of our paper is stability analysis for the aforementioned algorithms. We characterize stability regions (mean-square, asymptotic and in probability) using a test problem designed to capture randomization in the time variable of $f$ and we show that stability regions are empty for the explicit scheme, whereas for the implicit scheme they cover almost entire complex plane, see Proposition \ref{prop_stab} and Remark \ref{rem_stab}. 

This paper is organized as follows. Section \ref{sec:prel} of this paper contains basic notation, assumptions about \eqref{eq:ode}, outline of the model of computation (including specification of the noise which corrupts values of the function $f$) and a definition of the $n$th minimal error. In sections \ref{sec:expl} and \ref{sec:impl} we establish upper bounds of the $L^p(\Omega)$-error of the randomized Euler schemes under inexact information (explicit and implicit, respectively). Lower bounds and condition for optimality of the randomized Euler schemes are discussed in section \ref{sec:low}. In section \ref{sec:stab} we propose a non-classical test problem to assess stability of the investigated algorithms and we show superiority of the implicit scheme in this aspect. Section 6 highlights main conclusions of the paper. Finally, in Appendix we gather some auxiliary results.

\section{Preliminaries} \label{sec:prel}
Let $\|\cdot\|$ be the one norm in $\mathbb{R}^d$, i.e. $\|x\|=\sum\limits_{k=1}^d|x_k|$ for $x\in \mathbb{R}^d$. For $x\in\mathbb{R}^d$ and $r\in [0,\infty)$ we denote by $B(x,r)=\{y\in\mathbb{R}^d \colon \|y-x\|\leq r\}$ the closed ball in $\mathbb{R}^d$ with center $x$ and radius $r$. Moreover, $B(x,\infty) = \mathbb{R}^d$ for all $x\in\mathbb{R}^d$. \smallskip

Let $(\Omega,\Sigma,\mathbb{P})$ be a complete probability space and let $\mathcal{N}=\{A\in\Sigma \colon \mathbb{P}(A)=0\}$. For a~random variable $X:\Omega\to\mathbb{R}$, defined on  $(\Omega,\Sigma,\mathbb{P})$, we denote its $L^p(\Omega)$ norm by $\|X\|_p=(\mathbb{E}|X|^p)^{1/p}$, $p\in [2,\infty)$. For a Polish space $E$ by $\mathcal{B}(E)$ we denote the Borel $\sigma$-field on $E$. \smallskip
Let $\varrho\in (0,1], K,L \in (0,\infty)$ and $R\in (0,\infty]$. As in \cite{randRK}, we consider a class $F^\varrho_R=F^\varrho_R(a,b,d,\varrho,K,L)$ of pairs $\left(\eta,f\right)$ satisfying the following conditions:
\begin{itemize}
\setlength\itemsep{4pt}
\item[(A0)] $\left\|\eta\right\|\leq K$,
\item[(A1)] $f\in \mathcal{C}\left([a,b]\times\mathbb{R}^d\right)$,
\item[(A2)] $\| f(t,x)\| \leq K\left(1+\left\|x\right\|\right)$ for all $(t,x)\in [a,b]\times\mathbb{R}^d$,
\item[(A3)] $\| f(t,x) - f(s,x) \|\leq L|t-s|^\varrho$ for all $t,s\in [a,b], x\in B\left(\eta,R\right)$,
\item[(A4)] $\| f(t,x) - f(t,y) \|\leq L\|x-y\|$ for all $t\in [a,b], x,y \in B\left(\eta,R\right)$.
\end{itemize}
Note that $F^{\varrho}_{\infty}$ consists of globally Lipschitz continuous functions, whereas functions from $F^{\varrho}_{0}$ may not satisfy Lipschitz condition even locally. Moreover, $F^{\varrho}_{\infty}\subset F^{\varrho}_{R}\subset F^{\varrho}_{R^{'}}\subset F^{\varrho}_{0}$ for $\infty\geq R\geq R^{'}\geq 0$.
Parameters of the class $F^\varrho_R$ are: $a, b, d, \varrho\in (0,1], K \in (0,\infty), L \in (0,\infty)$ and $R\in [0,\infty]$. These parameters, excluding $a$, $b$, and $d$, are usually not known in practical applications. Thus, they will be not used as an input of algorithms presented later in the paper. \smallskip

To approximate the solution of \eqref{eq:ode} for $f\in F^{\varrho}_{R}$, we will consider randomized algorithms based on inexact information about $f$. Now we will introduce the model of computation. Let us define the following two classes of noise functions:
\begin{equation}
\mathcal{K}_1(\delta) = \left\{ \tilde{\delta} \colon [a,b]\times\mathbb{R}^d\to\mathbb{R}^d \ \colon \ \tilde{\delta} \text{ is Borel measurable}, \|\tilde{\delta}(t,y)\|\leq\delta\left(1+\left\| y\right\|\right)  \hbox{for all} \ t\in [a,b], y\in \mathbb{R}^d \right\} \label{eq:K1_delta}
\end{equation}
and
\begin{equation}
\mathcal{K}_2(\delta) = \left\{ \tilde{\delta} \in \mathcal{K}_1(\delta) \colon \bigl\| \tilde{\delta}(t,x) - \tilde{\delta}(t,y) \bigr\| \leq \delta \|x-y\| \text{ for all }t\in[a,b], x,y\in\mathbb{R}^d \right\}, \label{eq:K_delta}
\end{equation}
where $\delta\in[0,1]$ is called the precision parameter. Note that $\mathcal{K}_2(\delta) \subset \mathcal{K}_1(\delta)$ for each $\delta \in [0,1]$ and there is no direct inclusion between these classes and the class $\mathcal{K}(\delta)$ considered in \cite{randRK}.

We assume that an algorithm may use only noisy evaluations of the function $f$. Specifically, for each point $(t,y)\in [a,b]\times\mathbb{R}^d$ we have
$$\tilde{f}(t,y) = f(t,y) + \tilde{\delta}_f(t,y),$$
where $\tilde \delta_f$ is an element of the class $\mathcal{K}_i(\delta)$ ($i\in\{1,2\}$ depending on which framework we choose) and $\tilde \delta_f(t, y)$ is an error corrupting the exact value $f(t,y)$. We allow randomized choice of the evaluation points $(t,y)$. Let 
\begin{equation*}
    V_{f}^i(\delta)=\{ \tilde f \colon \exists_{\tilde\delta_f\in\mathcal{K}_i(\delta)} \ \tilde f =f+\tilde\delta_f\}
\end{equation*} 
and 
\begin{equation*}
    V^i_{(\eta,f)}(\delta) = B(\eta,\delta)\times V_{f}^i(\delta)
\end{equation*}
for $(\eta,f)\in F^{\varrho}_{R}$, $\delta\in [0,1]$ and $i\in\{1,2\}$. Let us note that $V^i_{(\eta,f)}(\delta)\subset V^i_{(\eta,f)}(\delta')$ for $0\leq\delta\leq\delta'\leq 1$ and $V^i_{(\eta,f)}(0)=\{(\eta,f)\}$. Moreover, it holds that $V^2_{(\eta,f)}(\delta)\subset V^1_{(\eta,f)}(\delta)$. An additional regularity condition has been imposed on noise functions in class $\mathcal{K}_2(\delta)$ in order to establish convergence of the implicit Euler scheme under noisy information, cf. Theorem \ref{theorem_implicit}. \smallskip

Let $(\eta,f)\in F^{\varrho}_{R}$ and $(\tilde\eta,\tilde f)\in V^1_{(\eta,f)}(\delta)$. A vector of noisy information about $(\eta,f)$ takes the following form:
\begin{equation*}
    N(\tilde \eta,\tilde f)=[\tilde f(t_0,y_0),\ldots,\tilde f(t_{i-1},y_{i-1}),\tilde f(\theta_0,z_0),\ldots,\tilde f(\theta_{i-1},z_{i-1}),\tilde\eta],
\end{equation*}
where $i\in\mathbb{N}$ and $(\theta_0,\theta_1,\ldots,\theta_{i-1})$ is a random vector on  $(\Omega,\Sigma,\mathbb{P})$. Furthermore,
\begin{equation*}
    (y_0,z_0)=\psi_0(\tilde\eta),
\end{equation*}
and
\begin{equation*}
    (y_j,z_j)=\psi_j\Bigl(\tilde f(t_0,y_0),\ldots,\tilde f(t_{j-1},y_{j-1}),\tilde f(\theta_0,z_0),\ldots,\tilde f(\theta_{j-1},z_{j-1}),\tilde\eta\Bigr)
\end{equation*}
for Borel measurable mappings $\psi_j:\mathbb{R}^{(2j+1)d}\to\mathbb{R}^d\times \mathbb{R}^d$, $j\in\left\{0,\ldots,i-1\right\}$. In particular, this implies that $N(\tilde \eta,\tilde f):\Omega\to\mathbb{R}^{(2i+1)d}$ is a random vector. The total number of noisy evaluations of $f$ is $l=2i$.  \smallskip

We consider the class $\Phi$ of algorithms $\mathcal{A}$ which aim to compute the approximate solution $z$ of \eqref{eq:ode} using $N(\tilde\eta,\tilde f)$. Such algorithms have the following form:
\begin{equation*}
\label{def_alg}
    \mathcal{A}(\tilde\eta,\tilde f,\delta)=\varphi(N(\tilde \eta,\tilde f)),
\end{equation*}
where
\begin{equation*}
    \varphi:\mathbb{R}^{(2i+1)d}\to D([a,b];\mathbb{R}^d)
\end{equation*}
is a Borel measurable function -- in the Skorokhod space $D([a,b];\mathbb{R}^d)$, endowed with the Skorokhod topology, we consider the Borel $\sigma$-field $\mathcal{B}(D([a,b];\mathbb{R}^d))$. Therefore  $\mathcal{A}(\tilde\eta,\tilde f,\delta)\colon\Omega\to D([a,b];\mathbb{R}^d)$ is $\Sigma$-to-$\mathcal{B}(D([a,b];\mathbb{R}^d))$ measurable. Moreover, by Theorem 7.1 in \cite{Parth} the $\sigma$-field $\mathcal{B}(D([a,b];\mathbb{R}^d))$ coincides with the $\sigma$-field generated by coordinate mappings. Hence, for all $t\in [a,b]$ the mapping
\begin{equation}
\label{fix_t_alg}
    \Omega\ni \omega\mapsto \mathcal{A}(\tilde\eta,\tilde f,\delta)(\omega)(t)\in\mathbb{R}^d
\end{equation}
is $\Sigma$-to-$\mathcal{B}(\mathbb{R}^d)$-measurable. For a given $n\in\mathbb{N}$ we denote by $\Phi_n$ a class of all algorithms from $\Phi$ requiring at most $n$ noisy evaluations of $f$.

Let $p\in [2,\infty)$. For a fixed $(\eta,f)\in F^{\varrho}_{0}$ the error of $\mathcal{A}\in\Phi_n$ is given as
\begin{equation}
\label{fixed_err}
    e^{(p)}(\mathcal{A},\eta,f,V^i,\delta)=\sup\limits_{(\tilde \eta,\tilde f)\in V_{(\eta,f)}^i(\delta)} \Bigl\|\sup\limits_{a\leq t\leq b}\|z(\eta,f)(t)-\mathcal{A}(\tilde\eta,\tilde f,\delta)(t)\|\Bigl\|_p,
\end{equation}
for $i\in\{1,2\}$. (The error is well-defined, see \cite{randRK}, Remark 2.) The worst-case error of the algorithm $\mathcal{A}$ is
defined by
\begin{equation} \label{alg_err}
    e^{(p)}(\mathcal{A},\mathcal{G},V^i,\delta)=\sup\limits_{(\eta,f)\in \mathcal{G}} e^{(p)}(\mathcal{A},\eta,f,V^i,\delta),
\end{equation}
where $i\in\{1,2\}$ and $\mathcal{G}$ is a subclass of $F^{\varrho}_{0}$, see \cite{TWW88}. Finally,
we consider the $n$th minimal error defined as
\begin{equation}
\label{nth_min_err}
    e^{(p)}_n(\mathcal{G},V^i,\delta)=\inf\limits_{\mathcal{A}\in\Phi_n}e^{(p)}(\mathcal{A},\mathcal{G},V^i,\delta), \quad i\in\{1,2\}.
\end{equation}
Of course, we have that $e^{(p)}_n(\mathcal{G},V^2,\delta)\leq e^{(p)}_n(\mathcal{G},V^1,\delta)$.

The proposed framework of inexact information can be used in mathematical description of lowering precision, which is an important topic in the context of efficient computations on CPUs and GPUs. For details, see \cite{randRK, KaMoPr19, PMPP17, PMPP19}.

\section{Error analysis of the randomized explicit Euler scheme under inexact information} \label{sec:expl}

In this section we provide upper bound for $L^p(\Omega)$-error of the randomized explicit Euler scheme when the radius $R$ appearing in assumptions (A3) and (A4) is sufficiently large. The only parameters necessary to specify $R$ are $a,b,K$ (thus, $R$ does not depend on a particular IVP). 

The randomized explicit Euler method under inexact information is given by the following recurrence relation:
\begin{equation}\label{eq:explicit_euler}
\bar V^0 = \tilde{\eta},  \ \ \bar V^j = \bar V^{j-1} + h\cdot \tilde f\left(\theta_j, \bar V^{j-1}\right), \ j\in\{1,\ldots,n\},
\end{equation}
where $(\eta,f)\in F^\varrho_R$ for some $R> 0$, $(\tilde\eta,\tilde f)\in V^1_{(\eta,f)}(\delta)$, $n\in\mathbb{Z}_+$, $h=\frac{b-a}{n}$, $t_j = a+jh$ for $j\in\{0,1,\ldots,n\}$, $\theta_j = t_{j-1} + \tau_jh$ and $\tau_j\sim U(0,1)$ for $j\in\{1,\ldots,n\}$. We assume that $\{\tau_1,\ldots,\tau_n\}$ is an independent family of random variables. Note that $$\tilde f\left(\theta_j, \bar V^{j-1}\right) = f\left(\theta_j, \bar V^{j-1}\right) + \tilde{\delta}_f\left(\theta_j, \bar V^{j-1}\right)$$ for some $\tilde \delta_f\in\mathcal{K}_1(\delta).$
The solution of \eqref{eq:ode} is approximated by a piecewise linear function $\bar l^{EE} \colon [a,b]\to\mathbb{R}^d$ given by 
\begin{equation} \label{eq:le}
\bar l^{EE}(t)=\bar l^{EE}_j(t) \text{ for } t\in [t_{j-1},t_j], \ \ \bar l^{EE}_j(t) = \frac{\bar V^{j}-\bar V^{j-1}}{h}(t-t_{j-1})+\bar V^{j-1}, \ \ j\in\{1,\ldots,n\}.
\end{equation}
For $\delta=0$ (i.e. in case of exact information) we use notation without bars: $V^j$, $l^{EE}$ and $l^{EE}_j$ instead of $\bar V^j$, $\bar l^{EE}$ and $\bar l^{EE}_j$, respectively. 

The main result of this section (Theorem \ref{theorem_explicit}) will be preceded by two auxiliary facts. In Fact \ref{lemma:ball} we show that the sequence generated by the randomized explicit Euler scheme (under certain assumptions) falls inside the ball $B(\eta,R)$ for suitably chosen $R$. Note that this is the case also for the exact solution $t\mapsto z(t)$ of \eqref{eq:ode}, as stated in Lemma \ref{lm:sol}(i) in Appendix. Fact \ref{f:noise} in turn provides upper bound of the difference between sequences generated by the algorithm under exact ($\delta=0$) and inexact information. Hence,  we adapt the proof technique from  \cite{randRK} in order to cover the case considered in this paper.
\begin{fact} \label{lemma:ball}
Let 
\begin{equation} \label{eq:R1}
   R_1 = (K+2)e^{(K+1)(b-a)} + K-1.
\end{equation}
Then for all 
$(\eta,f)\in F^{\varrho}_{R_1}$, $\bigl(\tilde{\eta},\tilde{f}\bigr) \in V^1_{(\eta,f)}(\delta)$, $n\in\mathbb{Z}_+$, $\delta\in [0,1]$, and $j\in\{0,1,\ldots,n\}$
$$V^j,\bar V^j\in B(\eta,R_1)$$
almost surely.
\end{fact}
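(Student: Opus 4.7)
The plan is to estimate $\|\bar V^j\|$ by a discrete Gronwall-type induction using the linear growth estimates (A2) and the noise bound from $\mathcal{K}_1(\delta)$, and then convert the bound on $\|\bar V^j\|$ into the required bound on $\|\bar V^j-\eta\|$ by the triangle inequality. The case of $V^j$ (exact information) will follow by specializing to $\delta=0$.

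First I would collect the pointwise information. From (A0) we have $\|\eta\|\le K$, and since $(\tilde\eta,\tilde f)\in V^1_{(\eta,f)}(\delta)$ we have $\|\tilde\eta-\eta\|\le\delta\le 1$, so $\|\bar V^0\|=\|\tilde\eta\|\le K+1$. From (A2) and the definition of $\mathcal{K}_1(\delta)$, for every $(t,y)\in[a,b]\times\mathbb{R}^d$,
\begin{equation*}
\|\tilde f(t,y)\|\le\|f(t,y)\|+\|\tilde\delta_f(t,y)\|\le(K+\delta)(1+\|y\|)\le(K+1)(1+\|y\|).
\end{equation*}
Crucially, this bound is global in $y$, so it applies without any a priori knowledge that $\bar V^{j-1}$ lies in $B(\eta,R_1)$.

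Next I would iterate. Writing $c=h(K+1)$, the recursion \eqref{eq:explicit_euler} gives, pathwise,
\begin{equation*}
\|\bar V^j\|\le\|\bar V^{j-1}\|+h(K+1)(1+\|\bar V^{j-1}\|)=(1+c)\|\bar V^{j-1}\|+c.
\end{equation*}
Setting $b_j=1+\|\bar V^j\|$, this becomes $b_j\le(1+c)b_{j-1}$, so by induction $b_j\le(1+c)^j b_0\le e^{jh(K+1)}(K+2)\le e^{(K+1)(b-a)}(K+2)$. Hence, almost surely and for every $j\in\{0,1,\dots,n\}$,
\begin{equation*}
\|\bar V^j-\eta\|\le\|\bar V^j\|+\|\eta\|\le(K+2)e^{(K+1)(b-a)}-1+K=R_1,
\end{equation*}
which is exactly the required bound. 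The statement for $V^j$ is obtained by taking $\delta=0$, so that $\tilde\eta=\eta$, $\tilde f=f$, and the same argument applies (with $c$ replaced by $hK$ throughout, giving an even smaller bound that is dominated by $R_1$).

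There is no real obstacle here; the only point to be careful about is that the bounds on $f$ and $\tilde\delta_f$ that we invoke, namely (A2) and the defining inequality of $\mathcal{K}_1(\delta)$, are global in the state variable, so the induction does not need to presuppose that the iterates have stayed in $B(\eta,R_1)$—it produces this fact as its conclusion. The condition $\delta\le 1$ is used exactly once, to absorb the noise constant into $K+1$, which is what makes the exponent match the definition \eqref{eq:R1} of $R_1$. Measurability of $\bar V^j$ (and hence of the event on which the bound holds) follows by induction from the Borel measurability of $\tilde f$ and of the random variables $\theta_j$, and the inequality above holds deterministically along every sample path, giving the almost-sure statement.
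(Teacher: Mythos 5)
Your proposal is correct and follows essentially the same route as the paper: bound $\|\bar V^0\|\le K+1$, use the global linear growth from (A2) and the $\mathcal{K}_1(\delta)$ bound to get the one-step recursion $\|\bar V^j\|\le(1+h(K+1))\|\bar V^{j-1}\|+h(K+1)$, iterate to obtain $\|\bar V^j\|\le(K+2)e^{(K+1)(b-a)}-1$, and conclude by the triangle inequality with (A0). The only cosmetic difference is that you solve the recursion via the substitution $b_j=1+\|\bar V^j\|$ where the paper cites the discrete Gronwall inequality; your explicit remark that (A2) is global in the state variable (so no a priori localization is needed) is a point the paper leaves implicit.
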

\begin{proof} 
Let us note that by considering any $\delta\in [0,1]$ we cover both cases $V^j$ and $\bar V^j$. By assumption (A0) and since $\tilde{\eta} \in B(\eta,\delta)\subset B(\eta,1)$,
$$\|\bar V^0\|\leq \|\eta\|+\|\tilde{\eta}-\eta\| \leq K+1.$$
Let $j\in\{1,\ldots,n\}$. Assumption (A2) and definition \eqref{eq:K1_delta} imply that the following inequality holds with probability $1$:
\begin{align*}
\bigl\|\bar V^j\bigr\| &  \leq  \bigl\| \bar V^{j-1}\bigr\| + h\bigl\| f(\theta_j,\bar V^{j-1})\bigr\|+h\bigl\|\tilde{\delta}_f(\theta_j, \bar V^{j-1} )\bigr\|  \\ &  \leq \bigl\| \bar V^{j-1}\bigr\|(1+h(K+1))+h(K+1).
\end{align*}
By discrete Gronwall's inequality:
\begin{equation}
    \bigl\|\bar V^j\bigr\| \leq \bigl\| \bar V^0\bigr\|(1+h(K+1))^n + (1+h(K+1))^n-1\leq  (K+2)e^{(K+1)(b-a)} -1 = R_1-K.\label{eq:V_bound}
\end{equation}
Note that \eqref{eq:V_bound} holds also for $j=0$ since $R_1-K \geq K+1 \geq\|\bar V^0\|$. By \eqref{eq:V_bound} and (A0),
$$\bigl\|\bar V^j-\eta \bigr\| \leq \bigl\|\bar V^j\bigr\|+\left\|\eta \right\| \leq R_1$$
for all $j\in\{0,1,\ldots,n\}$ and the proof is completed.
\end{proof}

\begin{fact} \label{f:noise}
Let $R_1$ be defined as in Fact \ref{lemma:ball}.
Then there exists a constant $C=C(a,b,K,L)>0$ such that for all $(\eta,f)\in F^{\varrho}_{R_1}$, $\bigl(\tilde{\eta},\tilde{f}\bigr) \in V^1_{(\eta,f)}(\delta)$, $n\in\mathbb{Z}_+$, $\delta \in [0,1]$ it holds
\begin{equation} \label{eq:4}
 \max_{0\leq j\leq n}\bigl\|V^j-\bar V^j\bigr\| \leq C\delta
\end{equation}
with probability $1$.
\end{fact}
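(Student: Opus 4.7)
The plan is to imitate the discrete Gronwall argument used for Fact \ref{lemma:ball}, but applied to the difference $D_j := \|V^j - \bar V^j\|$ rather than to $\|\bar V^j\|$ itself. Subtracting the two recurrences \eqref{eq:explicit_euler} (with $\delta=0$ versus $\delta>0$) and writing $\tilde f = f + \tilde\delta_f$ with $\tilde\delta_f\in\mathcal{K}_1(\delta)$, I would get, for each $j\in\{1,\ldots,n\}$,
\begin{equation*}
V^j - \bar V^j = \bigl(V^{j-1}-\bar V^{j-1}\bigr) + h\bigl[f(\theta_j,V^{j-1}) - f(\theta_j,\bar V^{j-1})\bigr] - h\,\tilde\delta_f(\theta_j,\bar V^{j-1}).
\end{equation*}
The initial gap is $D_0 = \|\eta - \tilde\eta\| \leq \delta$, since $\tilde\eta\in B(\eta,\delta)$.

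Next I would estimate the two increment terms separately. For the Lipschitz term, the crucial input is Fact \ref{lemma:ball}: both $V^{j-1}$ and $\bar V^{j-1}$ lie in $B(\eta,R_1)$ with probability one, so (A4) applies and yields $\|f(\theta_j,V^{j-1}) - f(\theta_j,\bar V^{j-1})\| \leq L\,D_{j-1}$ a.s. For the noise term, the definition \eqref{eq:K1_delta} of $\mathcal{K}_1(\delta)$ together with the intermediate bound $\|\bar V^{j-1}\| \leq R_1 - K$ obtained inside the proof of Fact \ref{lemma:ball} gives $\|\tilde\delta_f(\theta_j,\bar V^{j-1})\| \leq \delta(1 + R_1 - K)$ a.s. Setting $M := 1 + R_1 - K$ (which depends only on $a,b,K$), I obtain the pathwise linear recurrence
\begin{equation*}
D_j \leq (1+hL)\, D_{j-1} + hM\delta \qquad \text{a.s.}
\end{equation*}

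Finally, I would apply discrete Gronwall's inequality to this recurrence, starting from $D_0 \leq \delta$, to conclude
\begin{equation*}
\max_{0\leq j\leq n} D_j \leq \delta(1+hL)^n + M\delta\cdot\frac{(1+hL)^n - 1}{L} \leq \delta\, e^{L(b-a)} + \frac{M\delta}{L}\bigl(e^{L(b-a)} - 1\bigr),
\end{equation*}
using $(1+hL)^n \leq e^{Lnh} = e^{L(b-a)}$. Defining
\begin{equation*}
C := e^{L(b-a)} + \frac{M}{L}\bigl(e^{L(b-a)} - 1\bigr),
\end{equation*}
which depends only on $a,b,K,L$, gives the desired bound $\max_{0\leq j\leq n}\|V^j - \bar V^j\| \leq C\delta$ a.s.

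The argument is largely bookkeeping; the one non-trivial ingredient is ensuring that the local Lipschitz condition (A4) is usable at every step, which is exactly the reason Fact \ref{lemma:ball} was proved first. Once both sequences are confined a.s. to $B(\eta,R_1)$, everything reduces to the standard Gronwall estimate, and randomness (via $\theta_j$) plays no role beyond being the common evaluation time for both recurrences, so the final bound is pathwise rather than in $L^p$.
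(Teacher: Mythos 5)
Your proposal is correct and follows essentially the same route as the paper's proof: the same subtraction of the two recurrences, the same use of (A4) together with the a.s.\ confinement to $B(\eta,R_1)$ from Fact~\ref{lemma:ball}, the same linear-growth bound $\delta(1+R_1-K)$ on the noise term, and the same discrete Gronwall conclusion with a constant depending only on $a,b,K,L$. No gaps.
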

\begin{proof}
Firstly, let us note that $ \| \bar V^0 - V^0\|  =  \|\tilde\eta-\eta\| \leq \delta$. For $j\in\{1,\ldots,n\}$, by (A4), \eqref{eq:K1_delta} and \eqref{eq:V_bound}, we obtain
\begin{align*}
    \bigl\| \bar V^j - V^j \bigr\| & \leq \bigl\| \bar V^{j-1} - V^{j-1} \bigr\| + h \bigl\| f(\theta_j, \bar V^{j-1})-f(\theta_j, V^{j-1}) \bigr\| + h \bigl\|\tilde \delta_f(\theta_j,\bar V^{j-1})\bigr\| \\ &  \leq (1+hL) \bigl\| \bar V^{j-1} - V^{j-1} \bigr\| + h\delta (1+R_1-K).
\end{align*}
By discrete Gronwall's inequality:
\begin{align*}
    \bigl\| \bar V^j - V^j \bigr\| & \leq \left(1+hL\right)^n \bigl\| \bar V^0-V^0\bigr\| + \frac{\delta (1+R_1-K)}{L}\cdot\bigl( (1+hL)^n-1 \bigr) \\ & \leq e^{L(b-a)}\left( 1 + \frac{1+R_1-K}{L} \right)\delta,
\end{align*}
which leads to \eqref{eq:4}.
\end{proof}

\begin{theorem} \label{theorem_explicit}
Let $p\in[2,\infty)$. There exists a constant $C = C(a,b,d,K,L,\varrho, p)>0$ such that for all $n\geq \lfloor b-a\rfloor+1$, $\delta\in [0,1]$, $(\eta,f)\in F^{\varrho}_{R_0}$, $\bigl(\tilde{\eta},\tilde{f}\bigr)\in V^1_{(\eta,f)}(\delta)$ it holds
$$\left\| \sup_{a\leq t \leq b} \left\| z(\eta,f)(t)-\bar l^{EE}(\tilde{\eta},\tilde{f},\delta)(t)\right\| \right\|_p \leq C\left(h^{\min\left\{\varrho+\frac12,1\right\}}+\delta\right),$$  where $R_0 = \max \left\{R_1, \ R_2 \right\}$, $R_1$ is given by \eqref{eq:R1} and $R_2$ -- by \eqref{eq:R2}.
\end{theorem}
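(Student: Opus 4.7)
The plan is to split
$$\sup_{a\le t\le b}\|z(t)-\bar l^{EE}(t)\|\le \sup_{a\le t\le b}\|z(t)-l^{EE}(t)\|+\sup_{a\le t\le b}\|l^{EE}(t)-\bar l^{EE}(t)\|$$
and treat the two pieces separately. The choice $R_0=\max\{R_1,R_2\}$ is made so that Fact \ref{lemma:ball} puts all Euler iterates $V^j,\bar V^j$ in $B(\eta,R_0)$ almost surely, while Lemma \ref{lm:sol}(i) puts the exact trajectory $z(t)$ in the same ball. Consequently (A3) and (A4) act as global H\"older and Lipschitz conditions along every trajectory we need to compare.

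The noise term is handled directly by Fact \ref{f:noise}: since $l^{EE}$ and $\bar l^{EE}$ are piecewise linear on the common mesh, $\sup_t\|l^{EE}(t)-\bar l^{EE}(t)\|=\max_{0\le j\le n}\|V^j-\bar V^j\|\le C\delta$ a.s., contributing $C\delta$ to the bound.

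For the exact-information term I would first reduce to the nodal error. By (A2) both $z$ and $l^{EE}$ are Lipschitz with a constant $C=C(K,R_0)$, so each subinterval contributes an oscillation of order $h$ and
$$\sup_t\|z(t)-l^{EE}(t)\|\le Ch+\max_{0\le j\le n}\|z(t_j)-V^j\|.$$
Writing $E_j:=z(t_j)-V^j$, one has $E_j=E_{j-1}+A_j+B_j$ with
$$A_j=\int_{t_{j-1}}^{t_j}\!\bigl(f(s,z(s))-f(s,V^{j-1})\bigr)\rd s,\qquad B_j=\int_{t_{j-1}}^{t_j}\!f(s,V^{j-1})\rd s-h\,f(\theta_j,V^{j-1}).$$
By (A4) and the Lipschitz bound on $z$, $\|A_j\|\le hL\|E_{j-1}\|+CLh^2$ pointwise. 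Setting $\mathcal F_j=\sigma(\tau_1,\ldots,\tau_j)$, $V^{j-1}$ is $\mathcal F_{j-1}$-measurable and the uniform law of $\theta_j$ together with the independence of the $\tau_j$ gives $\E[f(\theta_j,V^{j-1})\mid\mathcal F_{j-1}]=h^{-1}\int_{t_{j-1}}^{t_j}f(s,V^{j-1})\rd s$; hence $(B_j)$ is a martingale difference sequence, and (A3) supplies the pathwise bound $\|B_j\|\le Lh^{1+\varrho}$. The Burkholder--Davis--Gundy inequality, applied coordinatewise in the one norm, yields
$$\Bigl\|\max_{1\le j\le n}\Bigl\|\sum_{k=1}^{j}B_k\Bigr\|\Bigr\|_p\le C_{p,d}\sqrt{n}\,Lh^{1+\varrho}=O\bigl(h^{\varrho+\tfrac12}\bigr).$$
Plugging this into the recursion for $\|E_j\|_p$, using Minkowski and a discrete Gronwall step in $L^p$ (where the $A_j$-part is handled by conditioning on $\mathcal F_{j-1}$ before taking $\|\cdot\|_p$), one obtains $\max_{0\le j\le n}\|E_j\|_p\le C\,h^{\min\{\varrho+1/2,1\}}$. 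Combining the three pieces gives the claimed bound.

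The main obstacles I anticipate are: (i) propagating the $L^p$ estimate through the discrete Gronwall argument, which forces a separation of the conditionally-deterministic term $A_j$ and the martingale term $B_j$ before recombining them via Minkowski; (ii) verifying BDG on $\mathbb R^d$-valued martingale differences with the one norm, with constants depending only on $p$ and $d$; and (iii) converting the nodal $L^p$-bound into a uniform-in-$t$ bound, which is why the reduction $\sup_t\|z(t)-l^{EE}(t)\|\le Ch+\max_j\|z(t_j)-V^j\|$ must hold pathwise. Once these are in place, the interpolation remainder $Ch$ is absorbed into $h^{\min\{\varrho+1/2,1\}}$ because $h<1$ under $n\ge\lfloor b-a\rfloor+1$, and the noise contribution $C\delta$ combines additively to complete the theorem.
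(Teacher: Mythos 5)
Your proposal is correct and follows the same overall architecture as the paper's proof --- reduce the uniform error to the nodal error plus an interpolation remainder, isolate a martingale quadrature term of order $h^{\varrho+\frac12}$, absorb the Lipschitz feedback by a discrete Gronwall argument, and treat the noise separately via Fact \ref{f:noise} --- but it differs in two genuine ways. First, your one-step decomposition $E_j-E_{j-1}=A_j+B_j$ with $B_j=\int_{t_{j-1}}^{t_j}f(s,V^{j-1})\,\mathrm{d}s-h\,f(\theta_j,V^{j-1})$ freezes the state at the previous iterate, so the martingale differences are bounded pathwise by $Lh^{1+\varrho}$ using (A3) alone; the paper instead writes $z(t_k)-V^k=S_1^k+S_2^k+S_3^k$, where the martingale part $S_1^k$ is the randomized Riemann-sum error of $z'$ and therefore needs the H\"older continuity of $z'$ from Lemma \ref{lm:sol}(ii). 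Second, you prove the maximal bound $O(\sqrt{n}\,h^{1+\varrho})=O(h^{\varrho+\frac12})$ directly by Burkholder--Davis--Gundy (coordinatewise, with a $d$-dependent constant from norm equivalence), whereas the paper cites Theorem 3.1 of Kruse--Wu for the same estimate; your route is self-contained, the paper's is shorter. Your cruder interpolation reduction $\sup_t\|z(t)-l^{EE}(t)\|\le Ch+\max_j\|E_j\|$ (versus the paper's $Ch^{1+\varrho}$ via the interpolant $\bar z_j$ of the exact solution) is also sufficient, since $h<1$ under $n\ge\lfloor b-a\rfloor+1$ makes $Ch\le Ch^{\min\{\varrho+\frac12,1\}}$. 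One cosmetic remark: the conditioning on $\mathcal{F}_{j-1}$ you propose for the $A_j$ term is unnecessary, because $\|A_j\|\le hL\|E_{j-1}\|+CLh^2$ already holds pathwise by (A4) and \eqref{eq:diff_z}, so Minkowski and discrete Gronwall apply directly in $L^p$. The ingredients needed to make (A3)/(A4) applicable --- $V^j\in B(\eta,R_1)$ from Fact \ref{lemma:ball} and $z(t)\in B(\eta,R_2)$ from Lemma \ref{lm:sol}(i) --- are correctly invoked through the choice $R_0=\max\{R_1,R_2\}$.
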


\begin{proof}
Let $\Delta_j=[t_{j-1},t_j]$ and 
\begin{equation}\label{eq:z_bar}
    \bar z_j(t) = \frac{z(t_j)-z(t_{j-1})}{h}(t-t_{j-1}) + z(t_{j-1})
\end{equation}
for $\ t\in\Delta_j$, $\ j\in\{1,\ldots,n\}$. In this proof $C$ may denote different constants but always depending only on the following parameters: $a,b,d,K,L,\varrho, p$. 

Let us observe that
\begin{equation} \label{eq:1}
\Bigl\| \sup_{a\leq t \leq b} \left\| z(t)-\bar l^{EE}(t)\right\| \Bigr\|_p \leq \max_{1\leq j\leq n} \sup_{t\in\Delta_j}\|z(t)-\bar z_j(t)\| + \Bigl\| \max_{1\leq j\leq n}\sup_{t\in\Delta_j} \left\| \bar z_j(t)-\bar l^{EE}_j(t)\right\| \Bigr\|_p.
\end{equation}
We will find an upper bound for the first term in the right-hand side of \eqref{eq:1}. By the Lagrange mean value theorem for $t\in\Delta_j$ we get
$$z(t) = \left( z_1(t),\ldots,z_d(t) \right) = \left( z_1(t_{j-1})+z_1'(\alpha_{1j}^t)(t-t_{j-1}), \ldots, z_d(t_{j-1})+z_d'(\alpha_{dj}^t)(t-t_{j-1}) \right),$$
where $\alpha^t_{lj} \in [t_{j-1},t] \subset \Delta_j$, $j\in\{1,\ldots,n\}$, $l\in\{1,\ldots,d\}$. Moreover, $\bar z_j(t_j) = z(t_j)$ for $j\in\{0,1,\ldots,n\}$ and
$$\frac{z_l(t_j)-z_l(t_{j-1})}{h} = z'_l(\beta_{lj})$$
for some $\beta_{lj}\in (t_{j-1},t_j) \subset \Delta_j$, $j\in\{1,\ldots,n\}$, $l\in\{1,\ldots,d\}$. As a result, by \eqref{eq:5} in Lemma \ref{lm:sol}(ii), we obtain
\begin{align*}
\|z(t)-\bar z_j(t)\| & = \sum_{l=1}^d \left|z_l'(\alpha^t_{lj})(t-t_{j-1}) -z_l'(\beta_{lj})(t-t_{j-1}) \right|  \\ & \leq h\sum_{l=1}^d |z_l'(\alpha^t_{lj}) - z_l'(\beta_{lj}) |  \leq C h \sum_{l=1}^d \left| \alpha^t_{lj} - \beta_{lj} \right|^\varrho \\ & \leq Cd\cdot h^{\varrho+1}
\end{align*}
for all $t\in\Delta_j$, $j\in\{1,\ldots,n\}$, which leads to 
\begin{equation} \label{eq:6}
\max_{1\leq j\leq n} \sup_{t\in\Delta_j}\|z(t)-\bar z_j(t)\| \leq Ch^{\varrho+1}.
\end{equation}
Now we will analyze the second term in the right-hand side of \eqref{eq:1}.  
Let us observe that for each $\omega\in\Omega$ and $j\in\{1,\ldots,n\}$ there exist $\alpha_j,\beta_j\in \mathbb{R}^d$ such that $\bar z_j(t)-\bar l^{EE}_j(t)=\alpha_j t+\beta_j$ for all $t\in\Delta_j$. Moreover, for each $r>0$ such that $\|\alpha_j t_{j-1} +\beta_j \|< r$ and $\|\alpha_j t_{j} +\beta_j \|< r$ the following holds: $$\|\alpha_j t +\beta_j \| \leq \frac{t_j-t}{t_j-t_{j-1}}\|\alpha_j t_{j-1} +\beta_j \| + \left(1-\frac{t_j-t}{t_j-t_{j-1}}\right)\|\alpha_j t_{j} +\beta_j \| < r$$ for all $t\in\Delta_j$. Taking $r=\max\left\{ \|z(t_{j-1})-\bar V^{j-1}\|,\| z(t_j)-\bar V^j \|\right\} $ leads to $$ \sup_{t\in\Delta_j} \bigl\| \bar z_j(t)-\bar l^{EE}_j(t)\bigr\| = \max\left\{ \|z(t_{j-1})-\bar V^{j-1}\|,\|z(t_j)-\bar V^j\|\right\}$$ and as a result
\begin{equation} \label{eq:2}
\Bigl\| \max_{1\leq j\leq n}\sup_{t\in\Delta_j} \bigl\| \bar z_j(t)-\bar l^{EE}_j(t)\bigr\| \Bigr\|_p \leq \Bigl\| \max_{0\leq j\leq n}\|z(t_j)-V^j\|\Bigr\|_p+\Bigl\| \max_{0\leq j\leq n}\|V^j-\bar V^j\|\Bigr\|_p .
\end{equation}
For $k\in\{1,\ldots,n\}$:
\begin{align}
    z\left(t_k\right)-V^k &  = \sum_{j=1}^k \left( z\left(t_j\right)-z\left(t_{j-1}\right) \right) - \sum_{j=1}^k \left( V^j - V^{j-1}\right) \notag \\ & = \sum_{j=1}^k \int\limits_{t_{j-1}}^{t_j}z'(s)\,\mathrm{d}s - h\sum_{j=1}^k f\left( \theta_j, V^{j-1} \right) \notag \\ & = S_1^k + S_2^k+S_3^k, \label{eq:S1S2S3}
\end{align}
where 
\begin{align*}
    S_1^k & = \sum_{j=1}^k\Bigl( \int\limits_{t_{j-1}}^{t_j} z'(s)\,\mathrm{d}s - hz'(\theta_j)\Bigr), \\ S_2^k & = h \sum_{j=1}^k\left(f(\theta_j,z(\theta_j))-f(\theta_j,z(t_{j-1})) \right), \\ S_3^k & = h \sum_{j=1}^k\left(f(\theta_j,z(t_{j-1}))-f(\theta_j,V^{j-1}) \right).
\end{align*}
Now we will show that 
\begin{equation} \label{eq:S3}
 \Bigl\| \max_{0\leq j\leq n} \|z(t_j)-V^j\| \Bigr\|_p \leq e^{L(b-a)}\cdot \left( \Bigl\| \max_{1\leq k\leq n} \|S_1^k\|  \Bigr\|_p + \Bigl\| \max_{1\leq k\leq n} \|S_2^k\|  \Bigr\|_p \right).
\end{equation}
Let us define $u_0=0$ and $$u_k = \Bigl\| \max_{0\leq j\leq k} \|z(t_j)-V^j\|\Bigr\|_p= \Bigl\| \max_{1\leq j\leq k} \|z(t_j)-V^j\|\Bigr\|_p,$$ for $k\in\{1,\ldots,n\}$. Then by (A4) we get
\begin{align*}
\Bigl\|\max_{1\leq j\leq k}\|S_3^j\|\Bigr\|_p & \leq hL \sum_{j=1}^k \Bigl\| \left\|z(t_{j-1})-V^{j-1} \right\| \Bigr\|_p  \leq hL \sum_{j=0}^{k-1} u_j.
\end{align*}
From the above and \eqref{eq:S1S2S3} we obtain
\begin{align*}
u_k  \leq \Bigl\| \max_{1\leq j\leq n} \|S_1^j\|\Bigr\|_p+ \Bigl\|\max_{1\leq j\leq n}\|S_2^j\|\Bigr\|_p+hL \sum_{j=0}^{k-1} u_j.
\end{align*}
Inequality \eqref{eq:S3} follows from discrete Gronwall's inequality.

Since $ h\sum\limits_{j=1}^k z'(\theta_j)$ is the randomized Riemann sum of $ \int\limits_{t_0}^{t_j} z'(s)\,\mathrm{d}s$ and $z'$ is $\varrho$-H\"{o}lder continuous (as stated in Lemma \ref{lm:sol}(ii)), we can use Theorem 3.1 from \cite{KruseWu_1} to show that 
\begin{equation} \label{eq:S1}
\Bigl\|\max_{1\leq k\leq n} \|S_1^k\|\Bigr\|_p \leq Ch^{\varrho+\frac12}.
\end{equation}
In fact, by (3.3) in Theorem 3.1:
$$ \Bigl\|\max_{1\leq k\leq n} \|S_1^k\|\Bigr\|_p \leq \sqrt{d} \cdot\Bigl\|\max_{1\leq k\leq n} \|S_1^k\|_2\Bigr\|_p \leq Ch^{\varrho+\frac12}\cdot \|z'\|_{\mathcal{C}^\varrho([a,b])}, $$
where $\|\cdot\|_2$ is the Euclidean norm in $\mathbb{R}^d$ and $\|g\|_{\mathcal{C}^\varrho([a,b])}$
is the H\"{o}lder norm for each $\varrho$-H\"{o}lder continuous function $g$.

Furthermore, 
\begin{equation} \label{eq:S2}
\Bigl\|\max_{1\leq k\leq n} \|S_2^k\|\Bigr\|_p \leq Ch
\end{equation}
because by (A4) and \eqref{eq:diff_z} in Lemma \ref{lm:sol}(ii) we have
\begin{align*}
\max_{1\leq k\leq n}\bigl\| S_2^k\bigr\| & \leq h\sum_{j=1}^n\left\|f(\theta_j,z(\theta_j))-f(\theta_j,z(t_{j-1})) \right\| \leq hL\sum_{j=1}^n C \left|\theta_j-t_{j-1}\right| \leq  hLC(b-a).
\end{align*}

From \eqref{eq:S3}, \eqref{eq:S1} and \eqref{eq:S2} it follows that
\begin{align}
\Bigl\| \max_{0\leq j\leq n} \|z(t_j)-V^j\| \Bigr\|_p & \leq C\left(h^{\varrho+\frac12}+h\right) \leq C h^{\min\left\{\varrho+\frac12,1\right\}} \cdot \left(1+(b-a)^{\left|\varrho-\frac12\right|}\right).\label{eq:final1}
\end{align}
From \eqref{eq:2}, \eqref{eq:final1} and Fact \ref{f:noise} we get
\begin{equation} \label{eq:final2}
\Bigl\| \max_{1\leq j\leq n}\sup_{t\in\Delta_j} \bigl\| \bar z_j(t)- \bar l^{EE}_j(t)\bigr\| \Bigr\|_p \leq C\left( h^{\min\left\{\varrho+\frac12,1\right\}}+\delta \right).
\end{equation}
By \eqref{eq:1}, \eqref{eq:6} and \eqref{eq:final2}, we obtain the desired claim.
\end{proof}

\section{Error analysis of the randomized implicit Euler scheme under inexact information} \label{sec:impl}
In this section we consider the class $F^{\varrho}_{\infty}$ of IVPs having the form \eqref{eq:ode} for which the right-hand side function satisfies the global Lipschitz condition.

The randomized implicit Euler method under inexact information is defined as follows. Let $n\in\mathbb{Z}_+$, $h=\frac{b-a}{n}$, $t_j = a+jh$ for $j\in\{0,1,\ldots,n\}$ and $\theta_j = t_{j-1} + \tau_jh$ for $j\in\{1,\ldots,n\}$, where $\tau_j\sim U(0,1)$ for $j\in\{1,\ldots,n\}$ are independent random variables on $(\Omega,\Sigma,\mathbb{P})$. Let $(\eta,f)\in F^{\varrho}_{\infty}$ and $(\tilde\eta,\tilde f)\in V^2_{(\eta,f)}(\delta)$. Iterations of the algorithm are given as follows: 
\begin{equation}
\label{eq:implicit_euler}
\bar U^0 = \tilde \eta,  \ \ \bar U^j = \bar U^{j-1} + h\cdot \tilde f\bigl(\theta_j, \bar U^{j}\bigr), \ j\in\{1,\ldots,n\}.
\end{equation}
Note that
$$\tilde f\bigl(\theta_j, \bar U^{j}\bigr) = f\bigl(\theta_j, \bar U^{j}\bigr) + \tilde \delta_f\bigl(\theta_j, \bar U^{j}\bigr)$$
for some $\tilde \delta_f\in \mathcal{K}_2(\delta)$. The solution to \eqref{eq:ode} is approximated by $\bar l^{IE} \colon [a,b]\to\mathbb{R}^d$ given by 
$$\bar l^{IE}(t)=\bar l^{IE}_j(t) \text{ for } t\in [t_{j-1},t_j], \ \ \bar l^{IE}_j(t) = \frac{\bar U^{j}-\bar U^{j-1}}{h}(t-t_{j-1})+\bar U^{j-1}, \ \ j\in\{1,\ldots,n\}.$$
By $U^j$, $l^{IE}$ and $l^{IE}_j$ we denote counterparts of $\bar U^j$, $\bar l^{IE}$ and $\bar l^{IE}_j$ under exact information.

The first step in our analysis is to show that for sufficiently small $h$ the algorithm has the solution, i.e. at each iteration there exists $\bar U^j$ satisfying \eqref{eq:implicit_euler}. To prove this fact we introduce the following filtration: $\mathcal{F}_0=\sigma(\mathcal{N})$ and $\mathcal{F}_j =\sigma\bigl( \sigma\left(\tau_1,\ldots,\tau_j\right)\cup\mathcal{N}\bigr)$ for $j\in\{1,\ldots,n\}$.
\begin{lemma} \label{lm:exist}
Let $\delta\in[0,1]$, $(\eta,f)\in F^{\varrho}_{\infty}$, $\bigl(\tilde{\eta},\tilde{f}\bigr) \in V^2_{(\eta,f)}(\delta)$, $n\in\mathbb{Z}_+$, $h(K+1)\leq\frac12$ and $h(L+1)<1$. Then there exists a unique solution $\left(\bar U^j\right)_{j=0}^n$ to the randomized implicit Euler scheme \eqref{eq:implicit_euler} under inexact information such that $\sigma(\bar U^j)\subset\mathcal{F}_j$ for $j\in\{0,1,\ldots,n\}$ and
\begin{equation} \label{eq:U_bound}
    \max\limits_{0\leq j\leq n}\|\bar U^j\|\leq (K+2)e^{2(K+1)(b-a)}-1
\end{equation}
with probability $1$.
\end{lemma}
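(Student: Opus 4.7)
The plan is to construct $\bar U^j$ by induction on $j$, exploiting contractivity of the implicit step. Fix $j\in\{1,\ldots,n\}$ and, assuming $\bar U^{j-1}$ has already been built as an $\mathcal{F}_{j-1}$-measurable random variable, consider for each $\omega\in\Omega$ the map
\begin{equation*}
G_j^\omega(x) = \bar U^{j-1}(\omega) + h\,\tilde f\bigl(\theta_j(\omega), x\bigr), \qquad x\in\mathbb{R}^d.
\end{equation*}
By (A4) with $R=\infty$ and the definition \eqref{eq:K_delta} of $\mathcal{K}_2(\delta)$ (using $\delta\leq 1$), the mapping $\tilde f(t,\cdot)=f(t,\cdot)+\tilde\delta_f(t,\cdot)$ is $(L+\delta)$-Lipschitz, so $G_j^\omega$ is a contraction with constant $h(L+\delta)\leq h(L+1)<1$. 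Banach's fixed-point theorem then produces a unique $\bar U^j(\omega)\in\mathbb{R}^d$ solving \eqref{eq:implicit_euler}. The base case $\bar U^0=\tilde\eta$ is $\mathcal{F}_0$-measurable, as $\tilde\eta\in B(\eta,\delta)$ is a deterministic input and $\mathcal{N}\subset\mathcal{F}_0$.

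To upgrade pathwise existence to the filtration-adaptedness $\sigma(\bar U^j)\subset\mathcal{F}_j$, I would realize $\bar U^j$ as a pointwise limit of Picard iterates. Setting $\bar U^j_{(0)}=\bar U^{j-1}$ and $\bar U^j_{(k+1)}=\bar U^{j-1}+h\,\tilde f(\theta_j,\bar U^j_{(k)})$, each iterate is $\mathcal{F}_j$-measurable since it is a Borel function of the $\mathcal{F}_j$-measurable variables $\bar U^{j-1}$, $\theta_j$, and $\bar U^j_{(k)}$. By contractivity, $\bar U^j_{(k)}(\omega)\to\bar U^j(\omega)$ for every $\omega$, so the limit inherits $\mathcal{F}_j$-measurability. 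This measurability bookkeeping is, I expect, the main subtlety of the proof: the pathwise Banach theorem alone gives only a family of fixed points, and the Picard construction is what lets us avoid any abstract measurable-selection argument while producing a random variable compatible with the filtration.

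For the uniform bound I would take norms in \eqref{eq:implicit_euler} and combine (A2) with the linear-growth estimate $\|\tilde\delta_f(t,y)\|\leq\delta(1+\|y\|)$ from \eqref{eq:K1_delta}, obtaining
\begin{equation*}
\|\bar U^j\|\leq \|\bar U^{j-1}\|+h(K+\delta)\bigl(1+\|\bar U^j\|\bigr)\leq \|\bar U^{j-1}\|+h(K+1)\bigl(1+\|\bar U^j\|\bigr).
\end{equation*}
Since $h(K+1)\leq\frac{1}{2}$, I can solve for $\|\bar U^j\|$ using $1/(1-h(K+1))\leq 1+2h(K+1)$, and after adding $1$ to both sides this rearranges to
\begin{equation*}
1+\|\bar U^j\|\leq \bigl(1+2h(K+1)\bigr)\bigl(1+\|\bar U^{j-1}\|\bigr).
\end{equation*}
Iterating from $1+\|\bar U^0\|\leq K+2$ (which follows from $\|\tilde\eta\|\leq\|\eta\|+\delta\leq K+1$) and using $(1+2h(K+1))^n\leq e^{2(K+1)(b-a)}$ delivers the bound \eqref{eq:U_bound}.
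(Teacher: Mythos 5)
Your proof is correct, and its skeleton (induction, the contraction $x\mapsto \bar U^{j-1}(\omega)+h\tilde f(\theta_j(\omega),x)$ with Lipschitz constant $h(L+\delta)\leq h(L+1)<1$ via (A4) and \eqref{eq:K_delta}, then the linear-growth recursion with $1/(1-h(K+1))\leq 1+2h(K+1)$) is the same as the paper's. The one genuine divergence is the measurability step, which you correctly single out as the crux: the paper applies its Lemma \ref{lemma:measurable} (a measurable-root selection result imported from \cite{backward_euler}), whereas you realize $\bar U^j$ as the everywhere-pointwise limit of Picard iterates, each of which is $\mathcal{F}_j$-measurable because $\tilde f=f+\tilde\delta_f$ is jointly Borel (by (A1) and the measurability clause in \eqref{eq:K1_delta}) and is composed with $\mathcal{F}_j$-measurable inputs. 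Your route is more self-contained and elementary -- it avoids any abstract selection argument -- at the cost of re-proving convergence of the iteration that Banach's theorem already packages; the paper's route is shorter on the page but outsources the measurability to an external lemma. Your Gronwall variant, passing to $1+\|\bar U^j\|$ so that the recursion becomes purely multiplicative, is a slightly cleaner presentation of the same estimate and lands on the identical bound \eqref{eq:U_bound}.
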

\begin{proof}[Proof] 
We will proceed by induction. Of course  $\bar U^0=\tilde{\eta}$ is deterministic and hence  $\mathcal{F}_0$-measurable. Let us assume that there exists  $\mathcal{F}_{j-1}$-measurable solution $\bar U^{j-1}$ to \eqref{eq:implicit_euler} for some $j\in\{1,\ldots,n\}$. 

We define a mapping $p_j \colon \Omega\times\mathbb{R}^d \to \mathbb{R}^d$ by the following formula:
\begin{equation}
p_j\left(\omega,x\right) = \bar U^{j-1}(\omega) + h \tilde f\left( \theta_j(\omega), x\right), \quad  (\omega,x)\in\Omega\times\mathbb{R}^d    
\end{equation}
and a mapping $h_j \colon \Omega\times\mathbb{R}^d \to \mathbb{R}^d$ by taking
\begin{equation}
     h_j(\omega,x) = p_j(\omega,x) - x, \quad  (\omega,x)\in\Omega\times\mathbb{R}^d.
\end{equation}
For each $\omega\in\Omega$ the function $x\mapsto h_j(\omega,x)$ is continuous, while for every $x\in\mathbb{R}^d$ the function $\omega\mapsto  h_j(\omega,x)$ is $\mathcal{F}_j$-measurable since $\sigma(\bar U^{j-1})\cup\sigma(\theta_j)\subset\mathcal{F}_j$. 
By (A4) and \eqref{eq:K_delta} we have for all $\omega\in\Omega$, $x,y\in\mathbb{R}^d$ that 
 \begin{align*}
\left\| p_j(\omega,x)-p_j(\omega,y)\right\| &  \leq h \bigl\| f(\theta_j(\omega),x)-f(\theta_j(\omega),y)\bigr\| +h \bigl\| \tilde \delta_f(\theta_j(\omega),x)-\tilde \delta_f(\theta_j(\omega),y)\bigr\|  \\ & \leq h (L+1)\left\| x-y\right\|. 
\end{align*}
Since $h(L+1)<1$, the function $x\mapsto p_j(\omega,x)$ is a contraction mapping for every $\omega\in\Omega$. From the Banach fixed-point theorem it follows that for every $\omega\in\Omega$ there exists a unique root $\bar U^j(\omega)\in\mathbb{R}^d$ of the function $x\mapsto h_j(\omega,x)$.
Hence, according to Lemma \ref{lemma:measurable} the mapping $\omega\mapsto\bar U^j(\omega)$ is $\mathcal{F}_j$-measurable and such that $\bar U^j = \bar U^{j-1} + h\cdot \tilde f\bigl(\theta_j, \bar U^{j}\bigr)$ with probability $1$.

Let us note that $\|\bar U^0\| \leq \|\eta\| + \delta \leq K+1$.
Moreover, from (A4), \eqref{eq:K1_delta} and \eqref{eq:K_delta}, we obtain
\begin{align*}
    \|\bar U^j\|  \leq \| \bar U^{j-1}\| + h\Bigl(\| f (\theta_j,\bar U^j) \| + \|\tilde \delta_f(\theta_j,\bar U^j) \| \Bigr) \leq \| \bar U^{j-1}\| + h(K+1 )(1+\| \bar U^j \|).
\end{align*}
for $j\in\{1,\ldots,n\}$. Since $$0<\frac{1}{1-h(K+1)}\leq 1+2h(K+1) \leq 2$$ for $0<h(K+1)\leq \frac12$, we obtain \begin{align*}
     \|\bar U^j\|  & \leq \frac{1}{1-h(K+1)}\|\bar U^{j-1}\|  +\frac{h(K+1)}{1-h(K+1)} \\ & \leq 
     \bigl( 1 + 2h(K+1)\bigr) \|\bar U^{j-1}\| +2h(K+1).
\end{align*}
By discrete Gronwall's inequality we obtain for $j\in\{0,1,\ldots,n\}$ the following inequality:
\begin{equation*}
     \|\bar U^j\|  \leq  \bigl( 1 + 2h(K+1)\bigr)^n \|\bar U^{0}\| + \bigl( 1 + 2h(K+1)\bigr)^n-1  \leq (K+2)e^{2(K+1)(b-a)}-1.
\end{equation*}
This concludes the proof.
\end{proof}

Now we will establish a result similar to Fact \ref{f:noise}.

\begin{fact} \label{lemma_U}
There exists a constant $C=C(a,b,K,L)>0$ such that
$$\max_{0\leq j\leq n} \bigl\| U^j-\bar U^j\bigr\| \leq C \delta $$
with probability $1$ for all $\delta \in [0,1]$, $(\eta,f)\in F^{\varrho}_{\infty}$, $\bigl(\tilde{\eta},\tilde{f}\bigr) \in V^2_{(\eta,f)}(\delta)$ and $n\in\mathbb{Z}_+$ such that $h(K+1) \leq \frac12$ and $hL\leq\frac12$.
\end{fact}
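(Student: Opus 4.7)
The plan is to mirror the argument of Fact~\ref{f:noise}, but to handle the implicit nature of \eqref{eq:implicit_euler} by moving the Lipschitz contribution of $f$ at the new time level to the left-hand side of the one-step estimate. First I would fix $(\eta,f)\in F^{\varrho}_\infty$ and $(\tilde\eta,\tilde f)\in V^2_{(\eta,f)}(\delta)$, set $e_j:=\bigl\|U^j-\bar U^j\bigr\|$, and observe the base case $e_0=\|\eta-\tilde\eta\|\leq\delta$ from the definition of $V^2_{(\eta,f)}(\delta)$. Both $(U^j)$ and $(\bar U^j)$ exist (and are pathwise unique with probability one) by Lemma~\ref{lm:exist}, so the argument is purely pathwise.

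Next, subtracting the exact and noisy implicit recursions gives
\begin{equation*}
\bar U^j-U^j \;=\;\bigl(\bar U^{j-1}-U^{j-1}\bigr)+h\bigl[f(\theta_j,\bar U^j)-f(\theta_j,U^j)\bigr]+h\,\tilde\delta_f(\theta_j,\bar U^j).
\end{equation*}
Taking norms, applying (A4) with $R=\infty$, and using the growth bound from \eqref{eq:K1_delta} (inherited by $\mathcal{K}_2(\delta)\subset\mathcal{K}_1(\delta)$) yields
\begin{equation*}
e_j\;\leq\;e_{j-1}+hL\,e_j+h\delta\bigl(1+\bigl\|\bar U^j\bigr\|\bigr).
\end{equation*}
By \eqref{eq:U_bound} in Lemma~\ref{lm:exist}, $1+\|\bar U^j\|\leq M:=(K+2)e^{2(K+1)(b-a)}$ almost surely, so
\begin{equation*}
(1-hL)\,e_j\;\leq\;e_{j-1}+h\delta M.
\end{equation*}

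Since $hL\leq\frac12$, one has $1/(1-hL)\leq 1+2hL$, hence $e_j\leq (1+2hL)\bigl(e_{j-1}+h\delta M\bigr)$. Iterating this linear recurrence (or invoking the discrete Gronwall inequality) gives
\begin{equation*}
e_j\;\leq\;(1+2hL)^n\,e_0+h\delta M\sum_{k=1}^{n}(1+2hL)^k\;\leq\; e^{2L(b-a)}\Bigl(1+\tfrac{M}{L}\bigr(e^{2L(b-a)}-1\bigl)\cdot\tfrac{1}{e^{2L(b-a)}}\Bigr)\,\delta,
\end{equation*}
which, after absorbing constants, is of the form $C\delta$ for some $C=C(a,b,K,L)$; this is uniform in $j\in\{0,\ldots,n\}$, yielding the claim. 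There is no real obstacle: the only subtlety beyond Fact~\ref{f:noise} is that the noise term is evaluated at $\bar U^j$ (not $\bar U^{j-1}$), which is what forces the use of Lemma~\ref{lm:exist} for an a priori $L^\infty$ bound on $\bar U^j$ and the $(1-hL)$-trick to make the recurrence explicit. Note that we do not need the Lipschitz property of $\tilde\delta_f$ supplied by $\mathcal{K}_2(\delta)$ at this stage; that assumption was needed only to secure existence in Lemma~\ref{lm:exist}.
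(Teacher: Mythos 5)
Your proposal is correct and follows essentially the same route as the paper's own proof: the same one-step estimate with the implicit Lipschitz term absorbed via $(1-hL)^{-1}\leq 1+2hL$, the a priori bound \eqref{eq:U_bound} for the noise term, and discrete Gronwall. The only blemish is that the final displayed constant is written in a slightly garbled form, but the resulting bound is still of the required shape $C(a,b,K,L)\,\delta$, and your closing observation that the Lipschitz property from $\mathcal{K}_2(\delta)$ is needed only for existence (Lemma \ref{lm:exist}), not for this estimate, is accurate.
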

\begin{proof}[Proof]
Let us note that $
    \| \bar U^0 - U^0\|  =  \|\tilde\eta-\eta\| \leq \delta$.
For $j\in\{1,\ldots,n\}$ we obtain
\begin{align*}
    \| \bar U^j - U^j \| &  \leq \| \bar U^{j-1} - U^{j-1} \| + h \| f(\theta_j, \bar U^j)-f(\theta_j, U^j) \| + h \|\tilde \delta_f(\theta_j,\bar U^j)\| 
    \\ & \leq \| \bar U^{j-1} - U^{j-1} \| + hL \|  \bar U^j- U^j \| + h\delta (1+C),
\end{align*}
where $C$ is a bound for $\|\bar U^j\|$ given by \eqref{eq:U_bound}.
Thus,
\begin{align*}
    \| \bar U^j - U^j \| & \leq \frac{1}{(1-hL)} \| \bar U^{j-1} - U^{j-1} \| + \frac{h (1+C)}{1-hL}\delta \\ & \leq (1+2hL) \| \bar U^{j-1} - U^{j-1} \| + 2h (1+C)\delta
\end{align*}
and by discrete Gronwall's inequality
\begin{align*}
    \| \bar U^j - U^j \| & \leq  (1+2hL)^n \| \bar U^{0} - U^{0} \| + \frac{(1+C)\delta}{L}\cdot\bigl((1+2hL)^n-1\bigr) \\ & \leq e^{2L(b-a)}\cdot \delta + \frac{1+C}{L}\cdot\bigl(e^{2L(b-a)}-1\bigr)\cdot\delta,
\end{align*}
which completes the proof.
\end{proof}

The following theorem is an analogue of Theorem \ref{theorem_explicit} for the implicit version of randomized Euler scheme. We will use similar error decomposition as in \eqref{eq:S1S2S3} but with an extra term representing a shift from $f\left( \theta_j, U^{j-1}\right)$ to $f\left( \theta_j, U^{j}\right)$.

\begin{theorem} \label{theorem_implicit}
Let $p\in[2,\infty)$. There exists a constant $C = C(a,b,d,K,L,\varrho, p)>0$ such that $$\left\| \sup_{a\leq t \leq b} \left\| z(\eta,f)(t)-\bar l^{IE}(\tilde{\eta},\tilde{f},\delta)(t)\right\| \right\|_p \leq C\left(h^{\min\left\{\varrho+\frac12,1\right\}}+\delta\right)$$ for all $\delta\in [0,1]$, $(\eta,f)\in F^{\varrho}_{\infty}$, $\bigl(\tilde{\eta},\tilde{f}\bigr)\in V^2_{(\eta,f)}(\delta)$ and $n\in\mathbb{Z}_+$ such that $h(K+1) \leq \frac12$ and $hL\leq\frac12$. 
\end{theorem}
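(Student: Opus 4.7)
The plan is to mirror the proof of Theorem \ref{theorem_explicit}, with the extra complications introduced by implicitness handled by Lemma \ref{lm:exist} and Fact \ref{lemma_U}. First, I split the error exactly as in \eqref{eq:1},
\begin{equation*}
\Bigl\| \sup_{a\leq t \leq b} \| z(t)-\bar l^{IE}(t)\| \Bigr\|_p \leq \max_{1\leq j\leq n} \sup_{t\in\Delta_j}\|z(t)-\bar z_j(t)\| + \Bigl\| \max_{1\leq j\leq n}\sup_{t\in\Delta_j} \| \bar z_j(t)-\bar l^{IE}_j(t)\| \Bigr\|_p,
\end{equation*}
where $\bar z_j$ is defined by \eqref{eq:z_bar}. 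The first term is scheme-independent and is bounded by $Ch^{\varrho+1}$ exactly as in \eqref{eq:6}. Then, by the same linear-interpolant argument leading to \eqref{eq:2},
\begin{equation*}
\Bigl\| \max_{1\leq j\leq n}\sup_{t\in\Delta_j} \| \bar z_j(t)-\bar l^{IE}_j(t)\| \Bigr\|_p \leq \Bigl\| \max_{0\leq j\leq n}\|z(t_j)-U^j\|\Bigr\|_p+\Bigl\| \max_{0\leq j\leq n}\|U^j-\bar U^j\|\Bigr\|_p,
\end{equation*}
and Fact \ref{lemma_U} immediately bounds the second summand by $C\delta$.

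For the exact-information error $z(t_k)-U^k$ I would use the telescoping identity combined with the implicit recurrence,
\begin{equation*}
z(t_k)-U^k = \sum_{j=1}^k\int\limits_{t_{j-1}}^{t_j}z'(s)\,\mathrm{d}s - h\sum_{j=1}^k f(\theta_j,U^j),
\end{equation*}
and add/subtract $hf(\theta_j,z(\theta_j))=hz'(\theta_j)$, $hf(\theta_j,z(t_{j-1}))$ and $hf(\theta_j,U^{j-1})$ to arrive at $z(t_k)-U^k = S_1^k+S_2^k+S_3^k+S_4^k$, where $S_1^k$ and $S_2^k$ are exactly as in \eqref{eq:S1S2S3}, $S_3^k = h\sum_{j=1}^k(f(\theta_j,z(t_{j-1}))-f(\theta_j,U^{j-1}))$ is the Gronwall term, and
\begin{equation*}
S_4^k = h\sum_{j=1}^k\bigl( f(\theta_j,U^{j-1})-f(\theta_j,U^j)\bigr)
\end{equation*}
is the new implicit-shift term foreshadowed in the paragraph preceding the theorem.

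The bounds $\| \max_k\|S_1^k\|\|_p\leq Ch^{\varrho+1/2}$ via Theorem 3.1 of \cite{KruseWu_1} and $\| \max_k\|S_2^k\|\|_p\leq Ch$ via (A4) and \eqref{eq:diff_z} carry over verbatim, since the arguments of $f$ in $S_1^k,S_2^k$ do not depend on the scheme. For $S_4^k$ I use (A4) together with the defining relation $U^j-U^{j-1}=hf(\theta_j,U^j)$ and (A2) to get $\|U^j-U^{j-1}\|\leq hK(1+\|U^j\|)$; since $\max_j\|U^j\|$ is deterministically bounded by Lemma \ref{lm:exist} applied with $\delta=0$, this yields $\max_k\|S_4^k\|\leq Ch$ almost surely. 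Setting $u_k = \| \max_{0\leq j\leq k}\|z(t_j)-U^j\|\|_p$, the term $S_3^k$ contributes $hL\sum_{j=0}^{k-1}u_j$ by (A4), so discrete Gronwall's inequality gives
\begin{equation*}
\Bigl\| \max_{0\leq j\leq n}\|z(t_j)-U^j\|\Bigr\|_p \leq e^{L(b-a)}\left(\|\max_k\|S_1^k\|\|_p + \|\max_k\|S_2^k\|\|_p + \|\max_k\|S_4^k\|\|_p\right) \leq C h^{\min\{\varrho+1/2,1\}}.
\end{equation*}
Combining this with the $Ch^{\varrho+1}$ bound on the interpolation error and the $C\delta$ bound from Fact \ref{lemma_U} yields the claim.

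The one step that is genuinely new, relative to Theorem \ref{theorem_explicit}, is the bound on $S_4^k$, which I expect to be the main obstacle; it is, however, only a cosmetic difficulty because the implicit defining equation together with the linear growth condition (A2) and the uniform bound from Lemma \ref{lm:exist} immediately gives $\|U^j - U^{j-1}\|\leq Ch$, a bound of the same order as the local truncation error, so $\|S_4^k\|\leq Ch$ uniformly in $k$ and $\omega$. Everything else is a faithful translation of the explicit proof.
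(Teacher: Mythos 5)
Your proposal is correct and follows essentially the same route as the paper: the same interpolation/noise splitting, the same four-term decomposition $S_1^k+S_2^k+S_3^k+S_4^k$, the Gronwall treatment of $S_3^k$, and the same bound $\|S_4^k\|\leq Ch$ obtained from (A4), the defining relation $U^j-U^{j-1}=hf(\theta_j,U^j)$, (A2), and the uniform bound on $\|U^j\|$ from Lemma \ref{lm:exist}. No gaps.
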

\begin{proof}
Proceeding analogously as in the proof of Theorem \ref{theorem_explicit}, we obtain 
\begin{equation*}
    \Bigl\| \sup_{a\leq t \leq b} \bigl\| z(t)-\bar l^{IE}(t)\bigr\| \Bigr\|_p \leq Ch^{\varrho+1} + \Bigl\| \max_{0\leq j\leq n}\|z(t_j)-U^j\|\Bigr\|_p+\Bigl\| \max_{0\leq j\leq n}\|U^j-\bar U^j\|\Bigr\|_p
\end{equation*}
-- cf. formulas \eqref{eq:1}, \eqref{eq:6} and \eqref{eq:2}. By Fact \ref{lemma_U}, 
\begin{equation} \label{eq:0i}
    \Bigl\| \sup_{a\leq t \leq b} \left\| z(t)-\bar l^{IE}(t)\right\| \Bigr\|_p \leq C\left( h^{\varrho+1} + \delta\right) + \Bigl\| \max_{0\leq j\leq n}\|z(t_j)-U^j\|\Bigr\|_p
\end{equation}
For $k\in\{1,\ldots,n\}$ it holds that 
\begin{align*}
    z\left(t_k\right)-U^k & = 
    S_1^k + S_2^k+S_3^k+S_4^k,
\end{align*}
where 
\begin{align*}
    S_1^k & = \sum_{j=1}^k \Bigl( \int\limits_{t_{j-1}}^{t_j} z'(s)\,\mathrm{d}s - hz'\left(\theta_j\right) \Bigr), \\
    S_2^k & = h \sum_{j=1}^k \left[ z'\left(\theta_j\right) - f\left( \theta_j, z\left(t_{j-1}\right) \right) \right], \\
    S_3^k & = h \sum_{j=1}^k \left[ f\left( \theta_j, z\left(t_{j-1}\right)\right) - f\left( \theta_j, U^{j-1}\right) \right], \\
    S_4^k & = h \sum_{j=1}^k \left[ f\left( \theta_j, U^{j-1}\right) - f\left( \theta_j, U^{j}\right) \right].
\end{align*}

By similar arguments as in the proof of Theorem \ref{theorem_explicit}, we show that 
\begin{equation*}
 \Bigl\| \max_{0\leq j\leq n} \|z(t_j)-U^j\| \Bigr\|_p \leq e^{L(b-a)}\cdot \left( \Bigl\| \max_{1\leq j \leq n} \|S_1^j\|  \Bigr\|_p + \Bigl\| \max_{1\leq j \leq n} \|S_2^j\|  \Bigr\|_p+\Bigl\| \max_{1\leq j\leq n} \|S_4^j\|  \Bigr\|_p \right),
\end{equation*}
cf. \eqref{eq:S3}. Furthermore, by analogy to \eqref{eq:S1} and \eqref{eq:S2},
\begin{equation} \label{eq:1i}
 \Bigl\| \max_{0\leq j\leq n} \|z(t_j)-U^j\| \Bigr\|_p \leq C \cdot \left( h^{\min \left\{ 1,\varrho+\frac12 \right\}}+\Bigl\| \max_{1\leq j\leq n} \|S_4^j\|  \Bigr\|_p \right).
\end{equation}
Using assumptions (A2) and (A4), we obtain inequality
\begin{align*}
    \bigl\| S_4^k\bigr\| & \leq h\sum_{j=1}^k \left\| f\left( \theta_j, U^{j-1}\right) - f\left( \theta_j, U^{j}\right) \right\|  \leq h^2L\sum_{j=1}^k \left\| U^{j-1}-U^j\right\| \\ & =  h^2L\sum_{j=1}^k \left\| f\left( \theta_j, U^j \right)\right\| \leq h^2LK \sum_{j=1}^k \left( 1+\left\|U^j\right\| \right)  \\ &  \leq hLK(b-a) \cdot \left( 1+ C \right)
\end{align*}
for all $k\in\{1,\ldots,n\}$ with probability $1$, where $C$ is a bound from \eqref{eq:U_bound}. This combined with \eqref{eq:1i} leads to 
\begin{equation} \label{eq:2i}
 \Bigl\| \max_{0\leq j\leq n} \|z(t_j)-U^j\| \Bigr\|_p \leq C h^{\min \left\{ 1,\varrho+\frac12 \right\}}.
\end{equation}
The thesis follows from \eqref{eq:0i} and \eqref{eq:2i}.
\end{proof}

\section{Lower bounds and optimality of randomized Euler schemes} \label{sec:low}
In this section we prove the following main result of the paper.
\begin{theorem}\label{lower_bounds}
Let $p\in [2,+\infty)$, $\varrho\in (0,1]$. There exist $C_1,C_2>0$, $n_0\in \mathbb{N}$, $\delta_0\in [0,1]$ such that for all $n\geq n_0$, $\delta\leq\delta_0$ the following holds
\begin{equation} \label{eq:lower}
    C_1(n^{-(\varrho+\frac{1}{2})}+\delta)\leq e^{(p)}_n(F^{\varrho}_{\infty},V^2,\delta)\leq e^{(p)}_n(F^{\varrho}_{R_0},V^1,\delta)\leq C_2 (n^{-\min\{1,\varrho+\frac{1}{2}\}}+\delta),
\end{equation}
where $R_0$ is defined in Fact \ref{f:noise}.
\end{theorem}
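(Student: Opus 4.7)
The statement splits into an upper half (the two rightmost inequalities) and a lower half (the leftmost).

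\textbf{Upper half.} The middle inequality follows from the inclusions $F^\varrho_\infty\subset F^\varrho_{R_0}$ and $V^2_{(\eta,f)}(\delta)\subset V^1_{(\eta,f)}(\delta)$, both recorded in Section~\ref{sec:prel}: enlarging the inner supremum in \eqref{fixed_err} or the outer supremum in \eqref{alg_err} enlarges the corresponding $n$th minimal error. For the right-most inequality we instantiate the randomized explicit Euler scheme $\bar l^{EE}$ with step size $h=(b-a)/n$; it belongs to $\Phi_n$ (it uses $n$ noisy evaluations of $f$), and Theorem~\ref{theorem_explicit} delivers an error of $C(n^{-\min\{\varrho+1/2,1\}}+\delta)$ uniformly over $F^\varrho_{R_0}$ and $V^1_{(\eta,f)}(\delta)$, which supplies $C_2$.

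\textbf{Lower half.} It suffices to prove the two separate bounds
\begin{equation*}
e^{(p)}_n(F^\varrho_\infty,V^2,\delta)\geq C' n^{-(\varrho+1/2)} \quad\text{and}\quad e^{(p)}_n(F^\varrho_\infty,V^2,\delta)\geq C''\delta,
\end{equation*}
valid for $n\geq n_0$ and $\delta\leq\delta_0$; then $C_1=\tfrac12\min(C',C'')$ yields the leftmost inequality in the statement.

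\textit{Noise lower bound.} We use the standard two-instance indistinguishability argument. In dimension $d=1$ take $\eta=0$, $f_1\equiv0$, $f_2\equiv\delta$; both pairs belong to $F^\varrho_\infty$ (assuming $K,L\geq1$, which we may), their solutions are $z_1\equiv 0$ and $z_2(t)=\delta(t-a)$, and the constants $\pm\delta/2$ lie in $\mathcal{K}_2(\delta)$, so $\tilde\eta=0$ together with $\tilde f\equiv\delta/2$ belongs to $V^2_{(0,f_1)}(\delta)\cap V^2_{(0,f_2)}(\delta)$. Any $\mathcal{A}\in\Phi_n$ thus produces the same random element on both inputs, and the triangle inequality in $L^p(\Omega)$ forces the larger of the two errors to be at least $\delta(b-a)/2$, giving $C''=(b-a)/2$.

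\textit{Information lower bound.} Since $(\eta,f)\in V^2_{(\eta,f)}(\delta)$, we have $e^{(p)}_n(F^\varrho_\infty,V^2,\delta)\geq e^{(p)}_n(F^\varrho_\infty,V^2,0)$, so it suffices to treat the noise-free case. Restricting to $d=1$, $\eta=0$, and right-hand sides of the form $f(t,x)=g(t)$ with $g\in\mathcal{C}([a,b];\mathbb{R})$ bounded by $K$ and $\varrho$-H\"{o}lder with constant at most $L$ embeds the class of H\"{o}lder continuous functions into $F^\varrho_\infty$. The solution is $z(t)=\int_a^t g(s)\,\mathrm{d}s$, so reading $\mathcal{A}$ at $t=b$ yields a randomized quadrature rule for $\int_a^b g$ using at most $n$ evaluations. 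The classical Bakhvalov-type lower bound of order $n^{-(\varrho+1/2)}$ for randomized quadrature of $\varrho$-H\"{o}lder functions then transfers and supplies $C'$. The main obstacle is securing this quadrature lower bound in the precise $L^p(\Omega)$-formulation for $p\geq2$; we would either cite the analogous statement proved in the companion paper \cite{randRK} or rederive it through Bakhvalov's averaging over random sign-flips of a hat-function perturbation, followed by Jensen's inequality to pass from the standard $L^2$-bound to $L^p$.
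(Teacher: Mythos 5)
Your proof is correct and follows essentially the same route as the paper: the class and noise-set inclusions give the middle inequality, Theorem~\ref{theorem_explicit} applied to $\bar l^{EE}$ gives the right-hand bound, a symmetric two-instance fooling argument gives the $\delta$-term (the paper uses the pair $f_{1,2}=\pm\delta e_1$ with common information $(0,0)$ rather than your $0$ and $\delta$ with common information $\delta/2$, which only changes the constant), and the reduction $e^{(p)}_n(F^\varrho_\infty,V^2,\delta)\ge e^{(p)}_n(F^\varrho_\infty,V^2,0)$ followed by the randomized-integration lower bound for H\"older functions (cited from Heinrich--Milla and Novak) gives the $n^{-(\varrho+1/2)}$ term. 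The only adjustment worth making is to drop the aside ``assuming $K,L\ge 1$'' and instead take $\delta_0=\min\{K,1\}$ as the paper does, since that is exactly what guarantees $f_2\equiv\delta$ satisfies (A2) for the given class parameters.
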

\begin{proof}
To show the first inequality let us note that 
\begin{equation} \label{lb_1}
     e^{(p)}_n(F^{\varrho}_{\infty},V^2,\delta) \geq e^{(p)}_n(F^{\varrho}_{\infty},V^2,0)  = \Omega(n^{-(\varrho+1/2)}), 
\end{equation}
when $\ n \to \infty$. This follows from lower bounds for an integration problem of H\"older continuous functions, see \cite{HeinMilla} and \cite{NOV} for the details. Furthermore, for any algorithm $\mathcal{A}\in\Phi_n$ and any $(\eta_1,f_1),(\eta_2,f_2)\in F^{\varrho}_\infty$ such that $V^2_{(\eta_1,f_1)}(\delta)\cap V^2_{(\eta_2,f_2)}(\delta)\neq\emptyset$ we have
    \begin{equation*} 
         e^{(p)}(\mathcal{A},F^{\varrho}_{\infty},V^2,\delta)\geq \frac{1}{2}\sup\limits_{a\leq t\leq b}\|z(\eta_1,f_1)(t)-z(\eta_2,f_2)(t)\|
    \end{equation*}
because for any $(\tilde\eta,\tilde f)\in V^2_{(\eta_1,f_1)}(\delta)\cap V^2_{(\eta_2,f_2)}(\delta)$ and for any $\mathcal{A}\in\Phi_n$ the following holds:
\begin{align*}
\sup\limits_{a\leq t\leq b}& \|z(\eta_1,f_1)(t)-z(\eta_2,f_2)(t)\| \\ & \leq \Bigl\|\sup\limits_{a\leq t\leq b}\|z(\eta_1,f_1)(t)-\mathcal{A}(\tilde\eta,\tilde f,\delta)(t)\| \Bigr\|_p+ \Bigl\|\sup\limits_{a\leq t\leq b}\|z(\eta_2,f_2)(t)-\mathcal{A}(\tilde\eta,\tilde f,\delta)(t)\|\Bigr\|_p \\ & \leq e^{(p)}(\mathcal{A},\eta_1,f_1,V^2,\delta)+e^{(p)}(\mathcal{A},\eta_2,f_2,V^2,\delta) \leq 2e^{(p)}(\mathcal{A},F^{\varrho}_{\infty},V^2,\delta).
\end{align*}   
Let us take $(\eta_1,f_1)=(0e_1,\delta e_1)$, $(\eta_2,f_2)=(0e_1,-\delta e_1)$, where $e_1 = (1,0,\ldots,0)$. These paris belong to $F^{\varrho}_\infty$ if $\delta\leq\min\{K,1\}$. Then $(0,0)\in V^2_{(\eta_1,f_1)}(\delta)\cap V^2_{(\eta_2,f_2)}(\delta)$, $z(\eta_1,f_1)(t)=\delta (t-a) e_1$ and $z(\eta_1,f_2)(t)=-\delta (t-a) e_1$. Thus,
    \begin{equation}
    \label{lb_2}
        e^{(p)}(\mathcal{A},F^{\varrho}_{\infty},V^2,\delta)\geq 
          \frac12 \sup\limits_{a\leq t\leq b}\| 2\delta (t-a)e_1 \| = (b-a)\delta.
    \end{equation}
By \eqref{lb_1} and \eqref{lb_2} we obtain the first inequality in \eqref{eq:lower}. The second inequality follows from the fact that $F^\varrho_\infty\subset F^\varrho_{R_0}$ and $V^2_{(\eta,f)}(\delta)\subset V^1_{(\eta,f)}(\delta)$. Indeed, 
\begin{align*} 
e^{(p)}_n(F^{\varrho}_{\infty},V^2,\delta) & = \inf_{\mathcal{A}\in \Phi_n} \sup_{(\eta,f)\in F^\varrho_\infty} e^{(p)}(\mathcal{A},\eta,f,V^2,\delta) \leq \inf_{\mathcal{A}\in \Phi_n} \sup_{(\eta,f)\in F^\varrho_{R_0}} e^{(p)}(\mathcal{A},\eta,f,V^2,\delta) \\ & \leq \inf_{\mathcal{A}\in \Phi_n} \sup_{(\eta,f)\in F^\varrho_{R_0}} e^{(p)}(\mathcal{A},\eta,f,V^1,\delta) = e^{(p)}_n(F^{\varrho}_{R_0},V^1,\delta).
\end{align*} 
Finally, the last inequality in \eqref{eq:lower} is a consequence of Theorem \ref{theorem_explicit}.
\end{proof}

The result above implies that when $\varrho\in (0,1/2]$, both randomized Euler schemes are optimal -- implicit version in the class of globally Lipschitz right-hand side functions $F^{\varrho}_{\infty}$, whereas explicit version in a broader class $F^{\varrho}_{R_0}$. In Proposition \ref{fact_not_optimal} below we will show that this is not the case for $\varrho\in (1/2,1]$.
\begin{proposition} \label{fact_not_optimal}
Let $\varrho\in\left(\frac12,1\right]$. Then
\begin{eqnarray}
    && e^{(p)}(l^{EE},F^{\varrho}_{R_0},V^1,\delta)=\Theta(n^{-1}+\delta),\\
    && e^{(p)}(l^{IE},F^{\varrho}_{\infty},V^2,\delta)=\Theta(n^{-1}+\delta),
\end{eqnarray}
as $n\to\infty$ and $\delta\to 0^+$.
%
\end{proposition}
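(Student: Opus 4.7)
The plan is to combine Theorems \ref{theorem_explicit} and \ref{theorem_implicit} with two concrete hard instances: one isolating the noise term $\delta$ and one forcing Euler's deterministic first-order truncation error. Because $\varrho>1/2$ gives $\min\{\varrho+1/2,1\}=1$, the upper bounds $e^{(p)}(l^{EE},F^{\varrho}_{R_0},V^1,\delta)\leq C(n^{-1}+\delta)$ and $e^{(p)}(l^{IE},F^{\varrho}_{\infty},V^2,\delta)\leq C(n^{-1}+\delta)$ are immediate from those theorems. All the work lies in the matching lower bounds.

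For the part of order $\delta$ I would recycle the two-hypothesis argument from the proof of Theorem \ref{lower_bounds}. Take $(\eta_1,f_1)=(0,\delta e_1)$ and $(\eta_2,f_2)=(0,-\delta e_1)$ in $F^{\varrho}_{\infty}\subset F^{\varrho}_{R_0}$; both admit the common noisy representation $(\tilde\eta,\tilde f)=(0,0)\in V^2_{(\eta_1,f_1)}(\delta)\cap V^2_{(\eta_2,f_2)}(\delta)$, on which both Euler schemes return the identically zero path. Since $z(\eta_i,f_i)(t)=\pm\delta(t-a)e_1$, the triangle inequality used in Theorem \ref{lower_bounds} forces the worst-case error of each of $l^{EE}$ and $l^{IE}$ to be at least $(b-a)\delta$.

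The part of order $n^{-1}$ is the genuine obstacle: I need a single IVP on which randomization confers no advantage and the deterministic $\mathcal{O}(h)$ truncation error of Euler survives. I propose the autonomous linear problem $d=1$, $\eta=1$, $f(t,x)=\alpha x$ with $\alpha\in(0,\min\{K,L\}]$, which lies in $F^{\varrho}_{\infty}$ because (A3) is trivially satisfied by $t$-independent $f$. Setting $\delta=0$ and feeding in the exact information, both schemes collapse to deterministic recurrences: $V^j=(1+\alpha h)^j$ and $U^j=(1-\alpha h)^{-j}$, the latter well-defined once $n_0$ is taken large enough that $h(L+1)<1$ and Lemma \ref{lm:exist} applies. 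Expanding $\log(1\pm\alpha h)=\pm\alpha h-(\alpha h)^2/2+\mathcal{O}(h^3)$ yields
\begin{equation*}
z(b)-V^n = e^{\alpha(b-a)}-(1+\alpha h)^n = \tfrac12\alpha^2(b-a)e^{\alpha(b-a)}h + \mathcal{O}(h^2),
\end{equation*}
and analogously $z(b)-U^n=-\tfrac12\alpha^2(b-a)e^{\alpha(b-a)}h+\mathcal{O}(h^2)$, both of exact order $h=1/n$. Since these errors are deterministic, $\bigl\|\sup_{a\le t\le b}\|z(t)-l^{EE}(t)\|\bigr\|_p\ge|z(b)-V^n|\ge c/n$ and similarly for $l^{IE}$; combining with the lower bound of order $\delta$ completes the proof. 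The only routine care required is rescaling $\alpha$ and $\eta$ to sit inside the fixed class parameters $K,L$ and choosing $n_0$ so the smallness conditions hold.
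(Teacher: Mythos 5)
Your proposal is correct and follows essentially the same route as the paper: the same hard instance $f(t,x)=\alpha x$ (with $\alpha=\min\{K,L\}$ in the paper) on which both schemes reduce to the deterministic recurrences $(1+\alpha h)^j$ and $(1-\alpha h)^{-j}$, with the $\Omega(1/n)$ gap at $t=b$ extracted by an equivalent asymptotic computation (your log expansion versus the paper's L'H\^opital limit). You are in fact slightly more explicit than the paper in supplying the $\Omega(\delta)$ part of the lower bound, which the paper leaves implicit via Theorem \ref{lower_bounds}.
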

\begin{proof}
Let $A=\min\{K,L\}$, $\eta=A$ and $f(t,y)=Ay$. Then $(\eta,f)\in F^\varrho_{\infty}$ and the exact solution to \eqref{eq:ode} is given by $z(t)=Ae^{A(t-a)}$,  $t\in [a,b]$. For each $n\in\mathbb{Z}_+$ such that $Ah\neq 1$ we obtain the following sequences of approximated values of function $z$ produced by explicit and implicit randomized Euler schemes, respectively, under exact information:
$$V^j = A(1+Ah)^j, \ \ \ U^j=A(1-Ah)^{-j} \ \ \ \text{for} \ \ \ j\in\{0,1,\ldots,n\},$$
where $h=\frac{b-a}{n}$. With the help of de L'H\^{o}pital's rule we get that $$\lim_{x\to\pm\infty}\left[ x\cdot \left( e^\gamma-\Bigl(1+\frac{\gamma}{x}\Bigr)^x \right) \right] = \frac{\gamma^2 e^\gamma}{2}$$
for all $\gamma>0$. Hence, 
$$\sup_{a\leq t \leq b} \left| z(t)-l^{EE}(t)\right| \geq |z(b)-V^n| = A \Bigl( e^{A(b-a)} - \Bigl(1+\frac{A(b-a)}{n}\Bigr)^n \Bigr) =  \Omega\Bigl(\frac{1}{n}\Bigr),$$
and
$$\sup_{a\leq t \leq b} \left| z(t)-l^{IE}(t)\right|\geq |z(b)-U^n| = A \Bigl( e^{A(b-a)} - \Bigl(1-\frac{A(b-a)}{n}\Bigr)^{-n} \Bigr) =  \Omega\Bigl(\frac{1}{n}\Bigr),$$
when $n\to\infty$. This combined with Theorem \ref{theorem_explicit} and Theorem \ref{theorem_implicit} completes the proof.
\end{proof}
 \begin{remark}
 In \cite{randRK} we have shown that the randomized Runge-Kutta scheme of order 2 is optimal in the class $F^{\varrho}_R$ for all $\varrho\in (0,1]$ and suitably chosen $R$. However, the class of corrupting function  functions $\tilde\delta$ considered in \cite{randRK} was smaller than $V^1$ -- corrupting functions were assumed to be bounded. We conjecture that the optimality of the randomized R-K method is preserved when the corrupting functions belong to the class $V^1$.
 \end{remark}
\section{Stability of randomized Euler schemes} \label{sec:stab}
In this section we investigate the stability issues of the randomized Euler schemes in the case of exact information, i.e. $\delta=0$. Typically the following test problem is used to analyze stability of numerical methods for ODEs:
\begin{equation}
	\label{TEST_PROBLEM}
		\left\{ \begin{array}{ll}
			z'(t)= \lambda z(t), \ t\geq 0, \\
			z(0) = \eta
		\end{array}\right.
\end{equation}
with $\lambda\in\mathbb{C}$, $\eta\neq 0$. The exact solution of \eqref{TEST_PROBLEM} is $z(t)=\eta\exp(\lambda t)$. Since in this problem the right-hand side function  $f(t,z) = \lambda z$  does not depend on the time variable, \eqref{TEST_PROBLEM} does not allow to capture randomization in Euler methods considered in this paper. Hence, we propose the following alternative test problem
\begin{equation}
	\label{TEST_PROBLEM2}
		\left\{ \begin{array}{ll}
			z'(t)= 2\lambda t z(t), \ t\geq 0, \\
			z(0) = \eta,
		\end{array}\right.
\end{equation}
with $\lambda\in\mathbb{C}$, $\eta\neq 0$. The exact solution of \eqref{TEST_PROBLEM2} is $z(t)=\eta\exp(\lambda t^2)$ and 
\begin{equation}
    \lim\limits_{t\to\infty} z(t)=0 \ \hbox{iff} \ \Re(\lambda)<0.
\end{equation}

Similarly as in \cite{randRK}, we consider three stability regions: 
\begin{eqnarray}
    &&\mathcal{R}^{MS}_{scheme} = \{h^2\lambda \in\mathbb{C} \colon W^k\to 0 \ \hbox{in $L^2(\Omega)$ as} \ k\to \infty \}, \notag\\
    &&\mathcal{R}^{AS}_{scheme} = \{h^2\lambda\in\mathbb{C} \colon W^k\to 0 \  \hbox{almost surely as} \ k\to \infty\},\notag\\
     &&\mathcal{R}^{SP}_{scheme} = \left\{ h^2\lambda\in \mathbb{C} \colon  W^k \to 0 \text{ in probability as} \ k\to \infty  \right\},
    \label{eq:regs}
\end{eqnarray}
where $(scheme, W)\in \{ (EE,V), (IE,U)\}$ and the sequences $\left(V^k\right)_{k=0}^\infty$ and $\left(U^k\right)_{k=0}^\infty$ are generated respectively by schemes \eqref{eq:explicit_euler} and \eqref{eq:implicit_euler} applied to the test problem \eqref{TEST_PROBLEM2} with $\delta=0$ (i.e. under exact information). Regions \eqref{eq:regs} are called the region of mean-square stability,  the region of asymptotic stability, and the region of stability in probability, respectively.

\begin{proposition} \label{prop_stab}
Stability regions of randomized Euler schemes have the following properties:

\begin{itemize}
\setlength{\itemsep}{0.2cm}
    \item[(i)] $\displaystyle \mathcal{R}_{EE}^{MS} =\mathcal{R}_{EE}^{AS} =\mathcal{R}_{EE}^{SP} = \emptyset$,
    \item[(ii)] $\displaystyle \mathbb{C}\setminus \left(\mathbb{R}_+\cup\{0\}\right) \subset \mathcal{R}^{MS}_{IE} \cap \mathcal{R}^{AS}_{IE} \cap \mathcal{R}^{SP}_{IE}$.
\end{itemize}
\end{proposition}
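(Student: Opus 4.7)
\emph{Proof plan.} The approach is to apply both schemes to \eqref{TEST_PROBLEM2} with $\delta=0$, reduce each iteration to an explicit product of independent factors, and then analyse the product as $k\to\infty$. Substituting $f(t,y)=2\lambda t y$ and $\theta_j = h(j-1+\tau_j)$ into \eqref{eq:explicit_euler} and \eqref{eq:implicit_euler}, and setting $w:=2h^2\lambda$ (so that $h^2\lambda\in\mathbb{C}\setminus(\mathbb{R}_+\cup\{0\})$ iff $w$ is), gives
\begin{equation*}
V^k = \eta\prod_{j=1}^{k}\bigl(1+w(j-1+\tau_j)\bigr), \qquad U^k = \eta\prod_{j=1}^{k}\frac{1}{1-w(j-1+\tau_j)},
\end{equation*}
with $(\tau_j)$ i.i.d.\ $U(0,1)$. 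In part (ii) the implicit iterates are well-defined because $1-w(j-1+\tau_j)$ cannot vanish when $w\notin\mathbb{R}_+$.

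For (i), first dispose of $\lambda=0$ trivially ($V^k=\eta\neq 0$). For $\lambda\neq 0$, the reverse triangle inequality yields $\bigl|1+w(j-1+\tau_j)\bigr| \geq |w|(j-1) - 1 \to \infty$ deterministically, so $\sum_{j}\log|1+w(j-1+\tau_j)|=+\infty$ almost surely and $|V^k|\to\infty$ a.s. This kills both asymptotic and in-probability stability. For mean-square, Fatou's lemma gives $\liminf_{k} \mathbb{E}|V^k|^2 \geq \mathbb{E}\liminf_k |V^k|^2 = \infty$, so MS stability fails as well, and $\mathcal{R}^{MS}_{EE}=\mathcal{R}^{AS}_{EE}=\mathcal{R}^{SP}_{EE}=\emptyset$.

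For (ii), the crux is a uniform linear-in-$j$ lower bound: for each $w\in\mathbb{C}\setminus(\mathbb{R}_+\cup\{0\})$ there exist $c(w)>0$ and $j_0\in\mathbb{N}$ with $|1-w(j-1+\tau)| \geq c(w)\, j$ for all $\tau\in[0,1]$ and $j\geq j_0$. This is the main obstacle, since a single bound does not work for the whole region: when $\mathrm{Im}(w)\neq 0$ one takes $|1-w(j-1+\tau)|\geq|\mathrm{Im}(w)|(j-1)$, but this degenerates on the negative real axis, where one instead uses $1-w(j-1+\tau)=1+|w|(j-1+\tau)\geq|w|(j-1)$; the two subcases together exhaust $\mathbb{C}\setminus(\mathbb{R}_+\cup\{0\})$. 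Feeding the bound into the product for $U^k$ gives, almost surely,
\begin{equation*}
|U^k| \leq |\eta|\cdot Y_{j_0}\cdot \frac{(j_0-1)!}{c(w)^{k-j_0+1}\,k!},
\end{equation*}
where $Y_{j_0}=\prod_{j=1}^{j_0-1}|1-w(j-1+\tau_j)|^{-1}$ is a finite random prefactor; the deterministic suffix vanishes in $k$, so $U^k\to 0$ a.s., which simultaneously yields asymptotic and in-probability stability. For MS stability, independence of the factors gives $\mathbb{E}|U^k|^2=|\eta|^2\prod_{j=1}^{k}\mathbb{E}|1-w(j-1+\tau_j)|^{-2}$; the expectations for $j<j_0$ are finite (the integrand is continuous on $[0,1]$ because $w\notin\mathbb{R}_+$), while for $j\geq j_0$ they are bounded by $c(w)^{-2}j^{-2}$, so the infinite product tends to $0$ as $k\to\infty$. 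This gives $w \in \mathcal{R}^{MS}_{IE}\cap\mathcal{R}^{AS}_{IE}\cap\mathcal{R}^{SP}_{IE}$ and completes the proof.
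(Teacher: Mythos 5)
Your proof is correct and takes essentially the same route as the paper: reduce both schemes to products of independent factors and show that, uniformly in $\tau_j$, each factor is eventually bounded below (explicit case) or above (implicit case) in a way that forces divergence, respectively convergence to zero, in all three modes. The only cosmetic difference is that you obtain the factor bounds via the reverse triangle inequality and a case split on $\mathrm{Im}(w)$, whereas the paper expands the squared modulus as a quadratic in $\theta_j$; both arguments yield the same conclusion.
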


\begin{proof}
Let $\left(V^k\right)_{k=0}^\infty$ and $\left(U^k\right)_{k=0}^\infty$ be the sequences generated respectively by schemes \eqref{eq:explicit_euler} and \eqref{eq:implicit_euler} applied to the test problem \eqref{TEST_PROBLEM2} with $\delta=0$. In both cases $\theta_j = h(j-1+\tau_j)$ for $j\in\mathbb{Z}_+$.

For the explicit scheme, let us observe that
\begin{align*}
    V^k = V^{k-1} \cdot \left( 1+2\lambda h \theta_k \right) = \cdots = \eta \cdot \prod_{j=1}^k \left( 1+2\lambda h\theta_j \right)
\end{align*}
for $k\in\mathbb{Z}_+$. Moreover,
\begin{align*}
    \bigl|V^k\bigr|^2  & =  \left|\eta\right|^2 \cdot \prod_{j=1}^k \left| 1+2\lambda h\theta_j \right|^2 = \left|\eta\right|^2 \cdot \prod_{j=1}^k \left( 1+4\Re\left(\lambda\right)h\cdot\theta_j + 4\left|\lambda\right|^2h^2\cdot\theta_j^2 \right)  
\end{align*}
and 
\begin{equation} \label{eq:expl_ineq}
    1+4\Re\left(\lambda\right)h\cdot\theta_j + 4\left|\lambda\right|^2h^2\cdot\theta_j^2 \geq 1+4\Re\left(\lambda\right)h^2\cdot(j-1) + 4\left|\lambda\right|^2h^4\cdot(j-1)^2 > 2 
\end{equation}
with probability $1$ for all $h>0$, $\lambda\in\mathbb{C}\setminus\{0\}$ and sufficiently big $j\in\mathbb{Z}_+$. As a result, $V^k \to \infty$ as $k\to\infty$ for all convergence types defined in \eqref{eq:regs}. For completeness, let us note that for $h>0$ and $\lambda=0$ we have $\left|V^k\right| = \left|\eta\right| > 0$ for all $k\in\mathbb{Z}_+$. This leads to (i).

For the implicit scheme we have
\begin{equation*}
    U^k = U^{k-1} + h\cdot 2\lambda \theta_k U^k
\end{equation*}
and thus
\begin{equation*}
    U^k = \frac{1}{1-2h\lambda\theta_k}U^{k-1} = \cdots = \eta \cdot \prod_{j=1}^k \frac{1}{1-2h\lambda\theta_j}
\end{equation*}
for $k\in\mathbb{Z}_+$. Let us take $\lambda\in\mathbb{C}\setminus \left(\mathbb{R}_+\cup\{0\}\right)$ and $h>0$. Then we have $1-2\lambda h t \neq 0$ for all $t\geq 0$. Furthermore, 
 \begin{equation} \label{eq:impl_ineq}
    \left| \frac{1}{1-2\lambda h\theta_j} \right|^2 \leq \frac{1}{1-4\Re\left(\lambda\right)h^2(j-1)+4\left|\lambda\right|^2h^4(j-1)^2} <\frac12 
\end{equation}
with probability $1$ for sufficiently big $j\in\mathbb{Z}_+$. Hence, $$\bigl|U^k\bigr| =  \left|\eta\right|\cdot\prod_{j=1}^k \left| \frac{1}{1-2\lambda h\theta_j} \right| \longrightarrow 0 \ \text{ as }  \ k\to\infty$$ for $\lambda\in\mathbb{C}\setminus \left(\mathbb{R}_+\cup\{0\}\right)$,  $h>0$, and for all convergence types considered in \eqref{eq:regs}. This  completes the proof.
\end{proof}

\begin{remark} \label{rem_stab}
In order to obtain classical (deterministic) versions of Euler schemes, it suffices to set $\theta_j=t_{j-1}$ for all $j\in\mathbb{Z}_+\cup \{0\}$ (in case of explicit method) or $\theta_j=t_{j}$ for all $j\in\mathbb{Z}_+\cup \{0\}$ (in case of implicit method). Hence, inequalities \eqref{eq:expl_ineq} and \eqref{eq:impl_ineq} imply that $\displaystyle \mathcal{R}_{EE} = \emptyset$ and $\displaystyle \mathbb{C}\setminus \left(\mathbb{R}_+\cup\{0\}\right) \subset \mathcal{R}_{IE}$, where $\mathcal{R}_{EE}$ and $\mathcal{R}_{IE}$ are the absolute stability regions for the deterministic explicit and implicit Euler schemes. This leads to the conclusion that randomization has no impact on the stability of Euler methods for both test problems \eqref{TEST_PROBLEM} and \eqref{TEST_PROBLEM2}, although for the latter the right-hand side function depends on the time variable (which is randomized). Both problems show however significant advantage of the implicit Euler scheme over the explicit one in terms of stability. This is particularly apparent for \eqref{TEST_PROBLEM2}, where stability regions of explicit and implicit methods represent two extreme cases. 
\end{remark}

\section{Conclusions}

We have established error bounds for randomized Euler schemes under mild assumptions about the right-hand side function. Our analysis has been performed in the setting of inexact information. It turns out that the randomized Euler schemes are optimal only for values of H\"older exponent $\varrho$ not greater than $\frac12$. Results for the explicit randomized Euler scheme are proven for broader classes of the right-hand side functions and noise functions than analogous results for the implicit scheme. On the other hand, the implicit scheme turned out to have a significant advantage in terms of stability. 
\section*{Appendix}
The following lemma can be proven in the same fashion as Lemma 1(i) in \cite{randRK}.
\begin{lemma} \label{lm:sol}
Let $(\eta,f)\in F^\varrho_{R_2}$, where \begin{equation}
    \label{eq:R2} R_2 = K(1+b-a)e^{K(b-a)} + K.
\end{equation}
Then 
\begin{itemize}
    \item[(i)] equation \eqref{eq:ode} has a unique solution $z=z(\eta,f)$ such that  $z\in\mathcal{C}^1\left([a,b]\times\mathbb{R}^d\right)$ and $z(t) \in B(\eta,R_2)$ for all $t\in[a,b]$;
    \item[(ii)] there exist  $C_1 = C_1(a,b,K)\in (0,\infty)$ and $C_2 = C_2(a,b,K,L)\in (0,\infty)$ such that for all $t \in [a,b]$
\begin{equation}\label{eq:diff_z}
 \|z(t)-z(s)\| \leq C_1 |t-s|, 
\end{equation}
\begin{equation} \label{eq:5}
\left\|z'(t)-z'(s)\right\| \leq C_2|t-s|^\varrho.
\end{equation}
\end{itemize}
\end{lemma}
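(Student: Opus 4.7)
The plan is to prove (i) via Picard--Lindel\"of applied on a ball where $f$ is Lipschitz, combined with an a priori bound that traps any solution inside $B(\eta,R_2)$; then (ii) follows from routine estimates using the integral form of \eqref{eq:ode}.

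First I would rewrite \eqref{eq:ode} as the integral equation
\begin{equation*}
z(t)=\eta+\int_a^t f(s,z(s))\,\mathrm{d}s,
\end{equation*}
and use (A0) together with the global linear growth (A2) to derive a priori that, for any continuous solution $z\colon[a,b]\to\mathbb{R}^d$,
\begin{equation*}
\|z(t)\|\leq K+K(b-a)+K\int_a^t\|z(s)\|\,\mathrm{d}s,
\end{equation*}
which by Gronwall's inequality yields $\|z(t)\|\leq K(1+b-a)e^{K(b-a)}$, and hence $\|z(t)-\eta\|\leq R_2$ for all $t\in[a,b]$. Since $R_2\leq R_2$ (and $R_2$ is the parameter of the class), (A1) and (A4) guarantee that $f$ is continuous and Lipschitz on $[a,b]\times B(\eta,R_2)$. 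Standard Picard--Lindel\"of on this ball provides a unique local solution, and the a priori bound above prevents exit from $B(\eta,R_2)$, so the solution extends uniquely to all of $[a,b]$. Since $f$ is continuous and $z$ is absolutely continuous with $z'(t)=f(t,z(t))$, we obtain $z\in\mathcal{C}^1([a,b];\mathbb{R}^d)$, giving (i).

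For (ii), the bound $\|z(t)\|\leq R_2-K$ combined with (A2) gives
\begin{equation*}
\|z(t)-z(s)\|\leq\int_s^t\|f(u,z(u))\|\,\mathrm{d}u\leq K(1+R_2-K)|t-s|,
\end{equation*}
which is \eqref{eq:diff_z} with $C_1=K(1+R_2-K)$. For \eqref{eq:5}, I would write
\begin{equation*}
z'(t)-z'(s)=\bigl[f(t,z(t))-f(s,z(t))\bigr]+\bigl[f(s,z(t))-f(s,z(s))\bigr],
\end{equation*}
and since $z(t),z(s)\in B(\eta,R_2)$, apply (A3) to bound the first bracket by $L|t-s|^\varrho$ and (A4) together with \eqref{eq:diff_z} to bound the second bracket by $LC_1|t-s|\leq LC_1(b-a)^{1-\varrho}|t-s|^\varrho$. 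Setting $C_2=L+LC_1(b-a)^{1-\varrho}$ yields \eqref{eq:5}.

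The only mildly delicate point is ensuring global existence/uniqueness under the merely local Lipschitz hypothesis (A4): the solution exists globally only because the a priori Gronwall bound traps it inside $B(\eta,R_2)$, which is precisely where the radius $R_2$ was engineered to work. The rest is bookkeeping with constants, and the whole argument is essentially the same as Lemma~1(i) of \cite{randRK} quoted in the statement.
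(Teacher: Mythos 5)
Your proof is correct and is exactly the standard argument the paper has in mind: the paper itself gives no proof of this lemma, only the remark that it ``can be proven in the same fashion as Lemma 1(i) in \cite{randRK}'', and that argument is precisely your Gronwall a priori bound trapping the solution in $B(\eta,R_2)$, Picard--Lindel\"of with continuation for (i), and the splitting $z'(t)-z'(s)=[f(t,z(t))-f(s,z(t))]+[f(s,z(t))-f(s,z(s))]$ with (A3), (A4) and \eqref{eq:diff_z} for (ii). Your constants $C_1=K(1+R_2-K)$ and $C_2=L+LC_1(b-a)^{1-\varrho}$ have the stated dependencies, so nothing is missing.
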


Next lemma is used to show existence and uniqueness of a measurable solution to the implicit randomized Euler scheme. Its proof can be found in \cite{backward_euler} (Lemma 4.3).
\begin{lemma} \label{lemma:measurable}
Let $\tilde{\mathcal{F}}$ be a complete sub $\sigma$-algebra of the $\sigma$-algebra $\Sigma$, $M \in \tilde{\mathcal{F}}$ with $\mathbb{P}\left(M\right) = 1$ and $h\colon \Omega\times \mathbb{R}^d \to \mathbb{R}^d$ such that the following conditions are fulfilled.
\begin{itemize}
\item[(i)] The mapping $x\mapsto h(\omega,x)$ is continuous for every $\omega\in M$.
\item[(ii)] The mapping $\omega\mapsto h(\omega,x)$ is $\tilde{\mathcal{F}}$-measurable for every $x\in\mathbb{R}^d$.
\item[(iii)] For every $\omega\in M$ there exists a unique root of the function $h(\omega,\cdot)$.
\end{itemize}
Define the mapping $$Q\colon \Omega \ni \omega \mapsto Q(\omega) \in \mathbb{R}^d,$$ where $Q(\omega)$ is the unique root of $h(\omega,\cdot)$ for $\omega\in M$ and $Q(\omega)$ is arbitrary for $\omega\in\Omega\setminus M$. \medskip

Then $Q$ is $\tilde{\mathcal{F}}$-measurable.
\end{lemma}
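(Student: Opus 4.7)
The plan is to verify that for every closed set $C\subset\mathbb{R}^d$, the preimage $\{\omega\in\Omega\colon Q(\omega)\in C\}$ lies in $\tilde{\mathcal{F}}$; since closed sets generate $\mathcal{B}(\mathbb{R}^d)$, this suffices. The crucial observation is that on $M$, $Q(\omega)$ is characterized as \emph{the} unique root of $h(\omega,\cdot)$, so
\[ Q(\omega)\in C \Longleftrightarrow \inf_{x\in C}\|h(\omega,x)\|=0 \]
for every $\omega\in M$. The ``$\Rightarrow$'' direction is immediate. For ``$\Leftarrow$'', given a minimizing sequence $(x_n)\subset C$, if it is bounded a convergent subsequence yields a root in $C$, which by uniqueness equals $Q(\omega)$. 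The unbounded case must be ruled out separately, and I handle it by a truncation argument sketched below.

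Next, to see that $\omega\mapsto\inf_{x\in C}\|h(\omega,x)\|$ is $\tilde{\mathcal{F}}$-measurable, I exploit the Carath\'eodory structure: by continuity of $x\mapsto h(\omega,x)$ on $M$ and separability of any closed set $C\subset\mathbb{R}^d$,
\[ \inf_{x\in C}\|h(\omega,x)\|=\inf_{x\in D}\|h(\omega,x)\| \]
for any countable dense subset $D\subset C$. Assumption (ii) gives that each $\omega\mapsto\|h(\omega,x)\|$ is $\tilde{\mathcal{F}}$-measurable, and countable infima preserve measurability. Combining this with the previous equivalence,
\[ \{\omega\in M\colon Q(\omega)\in C\}=M\cap\Bigl\{\omega\in\Omega\colon \inf_{x\in D}\|h(\omega,x)\|=0\Bigr\}\in\tilde{\mathcal{F}}. \]
The portion of $\{Q\in C\}$ lying in $\Omega\setminus M$ is unrestricted by the definition of $Q$, but since $\mathbb{P}(\Omega\setminus M)=0$ and $\tilde{\mathcal{F}}$ is complete, every subset of $\Omega\setminus M$ belongs to $\tilde{\mathcal{F}}$. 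Hence $\{Q\in C\}\in\tilde{\mathcal{F}}$ regardless of how $Q$ is chosen off $M$.

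The main obstacle I anticipate is the ``unbounded minimizing sequence'' case in the first paragraph: a priori, $\inf_{x\in C}\|h(\omega,x)\|=0$ might be approached along a sequence escaping to infinity, and $h(\omega,\cdot)$ need not be coercive. The cleanest workaround is to exhaust $\mathbb{R}^d$ by closed balls $B(0,k)$ and argue instead with the compact sets $C\cap B(0,k)$, on which the infimum is attained; the infimum--measurability step remains valid with $D$ replaced by a countable dense subset of $C\cap B(0,k)$. Then
\[ \{Q\in C\}=\bigcup_{k\in\mathbb{N}}\{Q\in C\cap B(0,k)\} \]
is a countable union, preserving $\tilde{\mathcal{F}}$-measurability. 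This reduces everything to preimages under $Q$ of compact sets, where the attained-infimum argument is robust and the uniqueness hypothesis (iii) does the rest.
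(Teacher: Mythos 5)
Your proof is correct: the reduction to the compact sets $C\cap B(0,k)$ properly closes the gap in the naive equivalence $Q(\omega)\in C\Leftrightarrow\inf_{x\in C}\|h(\omega,x)\|=0$ (which can indeed fail for unbounded closed $C$), and the completeness of $\tilde{\mathcal{F}}$ correctly absorbs the arbitrary values of $Q$ on the null set $\Omega\setminus M$. The paper itself supplies no proof, deferring to Lemma 4.3 of \cite{backward_euler}, and your argument -- preimages of closed sets tested via countable dense subsets, with the infimum attained on compacta and uniqueness of the root identifying the limit with $Q(\omega)$ -- is essentially the same standard argument used there.
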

 {\noindent\bf Acknowledgments}
\newline
This research was partly supported by the National Science Centre, Poland, under project 2017/25/B/ST1/00945.

\end{document}